\providecommand{\U}[1]{\protect\rule{.1in}{.1in}}
\newtheorem{theorem}{Theorem}
\newtheorem{definition}[theorem]{Definition}
\newtheorem{lemma}[theorem]{Lemma}
\newtheorem{proposition}[theorem]{Proposition}
\newtheorem{remark}[theorem]{Remark}
\newenvironment{proof}[1][Proof]{\noindent\textbf{#1.} }{\ \rule{0.5em}{0.5em}}
\renewcommand{\thefootnote}{\fnsymbol{footnote}}
\begin{document}

\author{Khaled Bahlali$\;^{a,\S ,1}$, Lucian Maticiuc$\;^{b,2}$, Adrian Z%
\u{a}linescu$\;^{b,2}$\bigskip \\
{\small $^{a}$~Universit\'{e} de Toulon, IMATH, EA 2134, 83957 La Garde
Cedex, France.}\\
{\small $^{b}$~Faculty of Mathematics, \textquotedblleft Alexandru Ioan
Cuza\textquotedblright\ University,}\\
{\small Carol I Blvd., no. 9, Iasi, 700506, Romania.}}
\title{Penalization method for a nonlinear Neumann PDE via weak solutions of
reflected {SDEs}$^{\ast }$}
\maketitle

\begin{abstract}
In this paper we prove an approximation result for the viscosity solution of
a system of semi-linear partial differential equations with continuous
coefficients and nonlinear Neumann boundary condition. The approximation we
use is based on a penalization method and our approach is probabilistic. We
prove the weak uniqueness of the solution for the reflected stochastic
differential equation and we approximate it (in law) by a sequence of
solutions of stochastic differential equations with penalized terms. Using
then a suitable generalized backward stochastic differential equation and
the uniqueness of the reflected stochastic differential equation, we prove
the existence of a continuous function, given by a probabilistic
representation, which is a viscosity solution of the considered partial
differential equation. In addition, this solution is approximated by
solutions of penalized partial differential equations.
\end{abstract}

\footnotetext[1]{{\scriptsize A part of this paper was presented, by
one of the authors, at \textquotedblleft Workshop on Stochastic
Analysis and Applications\textquotedblright , Marrakech - EL Kelaa
Mgouna, April 9 - 14, 2012.}} \footnotetext[4]{{\scriptsize CNRS,
LATP, CMI, Aix Marseille
Universit\'{e}, 39 rue Joliot-Curie,13453 Marseille (2013-2014).}} %
\renewcommand{\thefootnote}{\arabic{footnote}}\footnotetext[1]{{\scriptsize %
The work of this author was partially supported by PHC Volubilis MA/10/224
and PHC Tassili 13MDU887.}} \footnotetext[2]{{\scriptsize The work of this
author was supported by IDEAS no. 241/05.10.2011 and POSDRU/89/1.5/S/49944.}}%
\renewcommand{\thefootnote}{\fnsymbol{footnote}} \footnotetext{{\scriptsize %
E-mail addresses: bahlali@univ-tln.fr (Khaled Bahlali),
lucian.maticiuc@ymail.com (Lucian\ Maticiuc), adrian.zalinescu@gmail.com
(Adrian Z\u{a}linescu)}}

\textbf{AMS Classification subjects:} 60H99, 60H30, 35K61.\medskip

\textbf{Keywords or phrases: }Reflecting stochastic differential equation;
Penalization method; Weak solution; Jakubowski S-topology; Backward
stochastic differential equations.

\section{Introduction}

Let $G$ be a $C^{2}$ convex, open and bounded set from
$\mathbb{R}^{d}$ and for $\left( t,x\right) \in \left[ 0,T\right]
\times \bar{G}$ we consider the following
reflecting stochastic differential equation (SDE for short)%
\begin{equation*}
{X_{s}+K}_{s}{=x+\displaystyle\int_{t}^{s}b({X_{r})}dr+\displaystyle%
\int_{t}^{s}\sigma ({X_{r})}dW_{r}}\,,\;s\in \left[ t,T\right] ,
\end{equation*}%
with ${K}$ a bounded variation process such that for any $s\in \left[ t,T\right] $, $K_{s}={\displaystyle%
\int_{t}^{s}}\nabla \ell ({X_{r})}d\left\vert K\right\vert _{\left[ t,r%
\right] }$ and $\left\vert K\right\vert _{\left[ t,s\right] }={\displaystyle%
\int_{t}^{s}}\mathbb{1}_{\{X_{r}\in \partial G\}}d\left\vert K\right\vert _{%
\left[ t,r\right] }$ , where the notation $\left\vert K\right\vert _{\left[
t,s\right] }$ stands for the total variation of $K$ on the interval $\left[
t,s\right] $.

The coefficients $b$ and $\sigma$ are supposed to be only bounded continuous
on $\mathbb{R}^{d}$ and $\sigma\sigma^{\ast}$ uniformly elliptic. The first
main purpose is to prove that the weak solution $\left( X,K\right) $ is
approximated in law (in the space of continuous functions) by the solutions
of the non-reflecting SDE
\begin{equation*}
{X_{s}^{n}=x+\int_{t}^{s}}\left[ {b({X_{r}^{n})-}n}({X_{r}^{n}}-\pi_{\bar{G}%
}({X_{r}^{n}))}\right] {dr+\int_{t}^{s}\sigma({X_{r}^{n})}dW_{r}} , \ \ \
s\in\left[ t,T\right] ,
\end{equation*}
where $\pi_{\bar{G}}$ is the projection operator. Since for $n\rightarrow
\infty$ the term $K_{s}^{n}:={n\int_{t}^{s}}({X_{r}^{n}}-\pi_{\bar{G}}({%
X_{r}^{n}))dr}$ forces the solution ${X^{n}}$ to remain near the domain, the
above equation is called SDE with penalization term.

The case where $b$ and $\sigma $ are Lipschitz has been considered by Lions,
Menaldi and Sznitman in \cite{li-me-sz/81} and by Menaldi in \cite{me/83}
where they have proven that $\mathbb{E}(\sup_{s\in \lbrack 0,\
T]}|X_{s}^{n}-X_{s}|)\longrightarrow 0$, as $n\rightarrow \infty $. Note
that Lions and Sznitman have shown, using Skorohod problem, the existence of
a weak solution for the SDE with normal reflection to a (non-necessarily
convex) domain. The case of reflecting SDE with jumps has been treated by \L %
aukajtys and S\l omi\'{n}ski in \cite{la-sl/03} in the Lipschitz case; the
same authors have extended in \cite{la-sl/12} these results to the case
where the coefficient of the reflecting equation is only continuous.
In these two papers it is proven that the approximating sequence $\left(
X^{n}\right) _{n}$ is tight with respect to the $\mathrm{S}$-topology,
introduced by Jakubowski in \cite{ja/97} on the space $\mathcal{D}\left(
\mathbb{R}_{+},\mathbb{R}^{d}\right) $ of c\`{a}dl\`{a}g $\mathbb{R}^{d}$%
-valued functions. {Assuming the weak (in law) uniqueness of the limiting
reflected diffusion $X$, they prove in \cite{la-sl/12} that $X^{n}$ $\mathrm{%
S}$-converges weakly to }${X}$. We mention that $\left( X^{n}\right) _{n}$
may not be relatively compact with respect to the Skorohod topology $J_{1}$.

In contrast to \cite{la-sl/12}, we can not simply assume the uniqueness in
law of the limit $X,$ and the weak $\mathrm{\ S}$-convergence of $X^{n}$ to $%
X$ is not sufficient to our goal. In our framework, we need to show the
uniqueness in law of the couple $(X,K)$ and that the convergence in law of
the sequence $(X^{n},K^{n})$ to $(X,K)$ holds with respect to uniform
topology.

The first main result of our paper will be the weak uniqueness of the
solution $\left( X,K\right) $, together with the convergence in law (in the
space of continuous functions) of the penalized diffusion to the reflected
diffusion $X$ and the continuity with respect to the initial data.

Subsequently, using a proper generalized BSDE, we deduce (as a second main
result) an approximation result for a continuous viscosity solution of the
system of semi-linear partial differential equations (PDEs for short) with a
nonlinear Neumann boundary condition
\begin{equation*}
\begin{array}{l}
\dfrac{\partial u_{i}}{\partial t}\left( t,x\right) +Lu_{i}\left( t,x\right)
+f_{i}\left( t,x,u\left( t,x\right) \right) =0,\;\forall \left( t,x\right)
\in \lbrack 0,T]\times G,\medskip \\
\dfrac{\partial u_{i}}{\partial n}\left( t,x\right) +h_{i}\left( t,x,u\left(
t,x\right) \right) =0,\;\forall \left( t,x\right) \in \lbrack 0,T]\times
\partial G,\medskip \\
u_{i}\left( T,x\right) ={g}_{i}{(x}),\;\forall x\in G,\;i=\overline{1,k},%
\end{array}%
\end{equation*}%
where $L$ is the infinitesimal generator of the diffusion $X$, defined by
\begin{equation*}
L=\frac{1}{2}\sum_{i,j}(\sigma (\cdot )\,\sigma ^{\ast }(\cdot ))_{ij}(\cdot
)\frac{{\partial }^{2}}{\partial x_{i}\partial x_{j}}+\sum_{i}b_{i}(\cdot )%
\frac{\partial }{\partial x_{i}}~,
\end{equation*}%
and $\partial u_{i}/\partial n$ is the outward normal derivative of $u_{i}$
on the boundary of the domain.

Boufoussi and Van Casteren have established in \cite{bo-ca/04} a similar
result, but in the case where the coefficients $b$ and $\sigma $ are
uniformly Lipschitz.

We mention that the class of BSDEs involving a Stieltjes integral with
respect to the continuous increasing process $\left\vert K\right\vert _{%
\left[ t,s\right] }$ was studied first in \cite{pa-zh/98} by Pardoux and
Zhang; the authors provided a probabilistic representation for the viscosity
solution of a Neumann boundary partial differential equation. {It should be
mentioned that the continuity of the viscosity solution is rather hard to
prove in our frame. In fact, }this property essentially use the continuity
with respect to initial data of the solution of our BSDE. {We develop here a
more natural method based on the uniqueness in law of the solution $(X,K,Y)$
of the reflected SDE-BSDE and on the continuity property. Similar techniques
were developed, in the non reflected case, in \cite{ba-el-pa/09}, {\ but in
our situation the proof is more delicate. The} difficulty} is due to the
presence of the reflection process $K$ in the forward component and the
generalized part in the backward component.$\smallskip $

Throughout this paper we use different types of convergence defined as
follows: for the processes $\left( Y^{n}\right) _{n}$ and $Y$, by ${Y^{n}}%
\xrightarrow[u]{\;\;\;*\; \;\;}Y$ we denote the convergence in law with
respect to the uniform topology, by ${Y^{n}%
\xrightarrow[J_{1}]{\;\;\;*\;
\;\;}Y^{n}}$ we mean the convergence in law with respect to the Skorohod
topology $J_{1}$ and by ${Y^{n}}\xrightarrow[\mathrm{S}]{\;\;\;*\; \;\;}Y$
we understand the weak convergence considered in \textrm{S}-topology.

The paper is organized as follows: in the next section we give the
assumptions, we formulate the problem and we state the two main results. The
third section is devoted to the proof of the first main result (proof of the
convergence in law of $\left( X^{n},K^{n}\right) $ to $\left( X,K\right) $
as $n\rightarrow\infty$ and the continuous dependence with respect to the
initial data). In Section 4 the generalized BSDEs are introduced, the
continuity with respect to the initial data is obtained and we prove the
approximation result for the PDE introduced above.

\section{Formulation of the problem; the main results}

Let $G$ be a $C^{2}$ convex, open and bounded set from $\mathbb{R}^{d}$ and
we suppose that there exists a function $\ell\in C_{b}^{2}(\mathbb{R}^{d})$
such that%
\begin{equation*}
G=\{x\in\mathbb{R}^{d}:\ell\left( x\right) <0\},\;\partial G=\{x\in \mathbb{R%
}^{d}:\ell\left( x\right) =0\},
\end{equation*}
and, for all $x\in\partial G$, $\nabla\ell\left( x\right) $ is the unit
outward normal to $\partial G$.

In order to define the approximation procedure we shall introduce the
penalization term. Let $p:\mathbb{R}^{d}\rightarrow\mathbb{R}_{+}$ be given
by $p\left( x\right) =\mathrm{dist}^{2}(x,\bar{G})$.

Without restriction of generality we can choose $\ell$ such that%
\begin{equation*}
\left\langle \nabla\ell\left( x\right) ,\delta\left( x\right) \right\rangle
\geq0,\;\forall x\in\mathbb{R}^{d},
\end{equation*}
where $\delta\left( x\right) :=\nabla p\left( x\right) $ is called the
penalization term.

It can be shown that $p$ is of class $C^{1}$ on $\mathbb{R}^{d}$ with%
\begin{equation*}
\frac{1}{2}\delta\left( x\right) =\frac{1}{2}\nabla(\mathrm{dist}^{2}(x,\bar{%
G}))=x-\pi_{\bar{G}}\left( x\right) ,\;\forall x\in\mathbb{R}^{d},
\end{equation*}
where $\pi_{\bar{G}}\left( x\right) $ is the projection of $x$ on $\bar{G}.$
It is clear that $\delta$ is a Lipschitz function.

On the other hand, $x\mapsto\mathrm{dist}^{2}\left( x,\bar{G}\right) $ is a
convex function and therefore%
\begin{equation}
\left\langle z-x,\delta\left( x\right) \right\rangle \leq0,\;\forall x\in%
\mathbb{R}^{d},\;\forall z\in\bar{G}.  \label{subdif ineq}
\end{equation}
Let $T>0$ and suppose that:

\begin{itemize}
\item[\textrm{(A}$_{1}$\textrm{)}] $b:\mathbb{R}^{d}\rightarrow\mathbb{R}%
^{d} $\ \textit{and }$\sigma:\mathbb{R}^{d}\rightarrow\mathbb{R}^{d\times r}$%
\textit{\ are bounded continuous functions.}
\end{itemize}

\begin{remark}
In fact we can assume that the functions $b$ and $\sigma$ have sublinear
growth but, for the simplicity of the calculus, we will work with assumption
\textrm{(A}$_{1}$\textrm{).}
\end{remark}

\begin{itemize}
\item[\textrm{(A}$_{2}$\textrm{)}] \textit{the matrix }$\sigma\sigma^{\ast}$%
\textit{\ is uniformly elliptic}, i.e. there exists $\alpha_{0}>0$ such that
for all $x\in\mathbb{R}^{d}$, $\left( \sigma\sigma^{\ast}\right) \left(
x\right) \geq\alpha_{0}~I$.
\end{itemize}

Moreover, there exist some positive constants $C_{i}$, $i=\overline{1,2}$, $%
\alpha\in\mathbb{R}$, $\beta\in\mathbb{R}_{+}^{\ast}$ and $q\geq1$ such that

\begin{itemize}
\item[\textrm{(A}$_{3}$\textrm{)}] $f,h:\left[ 0,T\right] \times \mathbb{R}%
^{d}\times\mathbb{R}^{k}\rightarrow\mathbb{R}^{k}$\textit{\ and }$g:\mathbb{R%
}^{d}\rightarrow\mathbb{R}^{k}$ \textit{are continuous functions and, for
all }$x,x^{\prime}\in\mathbb{R}^{d}$, $y,y^{\prime}\in\mathbb{R}^{k}$, $%
t,t^{\prime}\in\left[ 0,T\right] \,,$%
\begin{equation}
\begin{array}{rl}
\left( i\right) & \left\langle
y^{\prime}-y,f(t,x,y^{\prime})-f(t,x,y)\right\rangle \leq\alpha\,\left\vert
y^{\prime}-y\right\vert ^{2},\medskip \\
\left( ii\right) & \left\vert h(t^{\prime},x^{\prime},y^{\prime
})-h(t,x,y)\right\vert \leq\beta\left( \left\vert t^{\prime}-t\right\vert
+\left\vert x^{\prime}-x\right\vert +\left\vert y^{\prime}-y\right\vert
\right) ,\medskip \\
\left( iii\right) & \left\vert f(t,x,y)\right\vert +\left\vert
h(t,x,y)\right\vert \leq C_{1}\left( 1+\left\vert y\right\vert \right)
,\medskip \\
\left( iv\right) & \left\vert g(x)\right\vert \leq C_{2}\left( 1+\left\vert
x\right\vert ^{q}\right) \,.%
\end{array}
\label{assumpt f,g,h}
\end{equation}
\end{itemize}

Let us consider the following system of semi-linear PDEs considered on the
whole space:%
\begin{equation}
\left\{
\begin{array}{r}
\dfrac{\partial u_{i}^{n}}{\partial t}\left( t,x\right) +Lu_{i}^{n}\left(
t,x\right) +f_{i}\left( t,x,u^{n}\left( t,x\right) \right) -{\left\langle
\nabla u_{i}^{n}(t,{x)},{n\delta}({x)}\right\rangle }\medskip \\
-{\left\langle \nabla\ell({x)},{n\delta}({x)}\right\rangle ~}h_{i}\left(
t,x,u^{n}\left( t,x\right) \right) =0\medskip \\
\multicolumn{1}{l}{u_{i}^{n}\left( T,x\right) ={g}_{i}{(x}),\;\forall\left(
t,x\right) \in\lbrack0,T]\times\mathbb{R}^{d},\;i=\overline{1,k}~,}%
\end{array}
\right.  \label{PDE Neumann approx}
\end{equation}
and the next semi-linear PDE considered with Neumann boundary conditions:%
\begin{equation}
\left\{
\begin{array}{l}
\dfrac{\partial u_{i}}{\partial t}\left( t,x\right) +Lu_{i}\left( t,x\right)
+f_{i}\left( t,x,u\left( t,x\right) \right) =0,\;\forall \left( t,x\right)
\in\lbrack0,T]\times G,\medskip \\
\dfrac{\partial u_{i}}{\partial n}\left( t,x\right) +h_{i}\left( t,x,u\left(
t,x\right) \right) =0,\;\forall\left( t,x\right) \in
\lbrack0,T]\times\partial G,\medskip \\
u_{i}\left( T,x\right) ={g}_{i}{(x}),\;\forall x\in G,\;i=\overline{1,k},%
\end{array}
\right.  \label{PDE Neumann}
\end{equation}
where $L$ is the second order partial differential operator%
\begin{equation*}
L=\frac{1}{2}\sum_{i,j}(\sigma(\cdot)\,\sigma^{\ast}(\cdot))_{ij}(\cdot )%
\frac{{\partial}^{2}}{\partial x_{i}\partial x_{j}}+\sum_{i}b_{i}(\cdot )%
\frac{\partial}{\partial x_{i}}~,
\end{equation*}
and, for any $x\in\partial G$%
\begin{equation*}
\dfrac{\partial u_{i}}{\partial n}\left( t,x\right) =\left\langle \nabla
\ell\left( x\right) ,\nabla u_{i}\left( t,x\right) \right\rangle
\end{equation*}
is the exterior normal derivative in $x\in\partial G.$

Our goal is to establish a connection between the viscosity solutions for (%
\ref{PDE Neumann approx}) and (\ref{PDE Neumann}) respectively. The proof
will be given using a probabilistic approach. Therefore we start by studying
an SDE with reflecting boundary condition and then we associate a
corresponding backward SDE. Since the coefficients of the forward equation
are merely continuous, our setting is that of weak formulation of solutions.$%
\bigskip$

For $\left( t,x\right) \in\left[ 0,T\right] \times\bar{G}$ we consider the
following stochastic differential equation with reflecting boundary
condition:%
\begin{equation}
\begin{array}{rl}
\left( i\right) & {X_{s}^{t,x}+K}_{s}^{t,x}{=x+\displaystyle\int_{t}^{s}b({%
X_{r}^{t,x})}dr+\displaystyle\int_{t}^{s}\sigma({X_{r}^{t,x})}dW_{r}}%
\,,\medskip \\
\left( ii\right) & K_{s}^{t,x}={\displaystyle\int_{t}^{s}}\nabla\ell ({%
X_{r}^{t,x})}d\left\vert K^{t,x}\right\vert _{\left[ t,r\right] }\,,\medskip
\\
\left( iii\right) & \left\vert K^{t,x}\right\vert _{\left[ t,s\right] }={%
\displaystyle\int_{t}^{s}}\mathbb{1}_{\{X_{r}^{t,x}\in\partial
G\}}d\left\vert K^{t,x}\right\vert _{\left[ t,r\right] }\,,\;\forall s\in%
\left[ t,T\right] ,%
\end{array}
\label{reflected SDE}
\end{equation}
where $\left\vert K^{t,x}\right\vert _{\left[ t,s\right] }$ is the the total
variation of $K^{t,x}$ on the interval $\left[ s,t\right] \footnote{%
For $0\leq s<t\leq T$, the total variation of $Y$ on $\left[ s,t\right] $ is
given by $\left\vert Y\right\vert _{\left[ s,t\right] }\left( \omega\right)
=\sup\limits_{\Delta}\Big\{\sum \limits_{i=0}^{n-1}|Y_{t_{i+1}}\left(
\omega\right) -Y_{t_{i}}\left( \omega\right) |\Big\},$ where $%
\Delta:s=t_{0}<t_{1}<\cdots<t_{n}=t$ is a partition of the interval $\left[
s,t\right] $.}$.

We denote by $k_{s}^{t,x}$ the continuous increasing process defined by $%
k_{s}^{t,x}:=\left\vert K^{t,x}\right\vert _{\left[ t,s\right] }$. It
follows that%
\begin{equation}
k_{s}^{t,x}={\int_{t}^{s}}\left\langle \nabla\ell({X_{r}^{t,x})}%
,dK_{r}^{t,x}\right\rangle \,.  \label{def k}
\end{equation}
Using the penalization term $\delta$ we can define the approximation
procedure for the reflected diffusion $X$.

Under assumption $\mathrm{(A}_{1}\mathrm{)}$ we know that (see, e.g., \cite[%
Theorem 5.4.22]{ka-sh/88}), for each $n\in \mathbb{N}^{\ast }$, there exists
a weak solution of the following penalized SDE%
\begin{equation}
{X_{s}^{t,x,n}=x+\int_{t}^{s}}\left[ {b({X_{r}^{t,x,n})-}n\delta }({%
X_{r}^{t,x,n})}\right] {dr+\int_{t}^{s}\sigma ({X_{r}^{t,x,n})}dW_{r}}%
\,,\;\forall s\in \left[ t,T\right] .  \label{penalized reflected SDE}
\end{equation}%
Let%
\begin{equation}
\begin{array}{l}
K_{s}^{t,x,n}:=\displaystyle{\int_{t}^{s}n\delta }({X_{r}^{t,x,n})}%
dr\,,\medskip \\
{{k}_{s}^{t,x,n}:=\displaystyle{\int_{t}^{s}}\left\langle \nabla \ell ({%
X_{r}^{t,x,n})},dK_{r}^{t,x,n}\right\rangle ~,}\;\forall s\in \left[ t,T%
\right] .%
\end{array}
\label{def k^n}
\end{equation}%
We mention that (see, e.g., \cite{ka-sh/88}) the solution process $({{%
X_{s}^{t,x,n})}}_{s\in \left[ t,T\right] }$ is unique in law under the
supplementary assumption $\mathrm{(A}_{2}\mathrm{)}$.

Here and subsequently, we shall denote by ${V}$ and ${V}^{n}$:%
\begin{equation}
\begin{array}{l}
{V}_{s}^{t,x}:={x+\displaystyle\int_{t}^{s}b({X_{r}^{t,x,n})}dr+\displaystyle%
\int_{t}^{s}\sigma({X_{r}^{t,x,n})}dW_{r}}\,,\;\medskip \\
{V}_{s}^{t,x,n}:={x+\displaystyle\int_{t}^{s}b({X_{r}^{t,x,n})}dr+%
\displaystyle\int_{t}^{s}\sigma({X_{r}^{t,x,n})}dW_{r}}\,,\;\forall s\in%
\left[ t,T\right] .%
\end{array}
\label{def V^n}
\end{equation}
Hence (\ref{reflected SDE}) and (\ref{penalized reflected SDE}) become
respectively%
\begin{equation*}
X_{s}^{t,x}+K_{s}^{t,x}=V_{s}^{t,x}\quad\text{and}\quad
X_{s}^{t,x,n}+K_{s}^{t,x,n}=V_{s}^{t,x,n}~,\;\forall s\in\left[ t,T\right] .
\end{equation*}

\begin{definition}
We say that $\big(\Omega,\mathcal{F},\mathbb{P},\left\{ \mathcal{F}%
_{s}\right\} _{s\geq t},W,X,K\big)$ is a weak solution of (\ref{reflected
SDE}) if $\big(\Omega,\mathcal{F},\mathbb{P},\left\{ \mathcal{F}_{s}\right\}
_{s\geq t}\big)$ is a stochastic basis, $W$ is a $d^{\prime}$-dimensional
Brownian motion with respect to this basis, $X$ is a continuous adapted
process and $K$ is a continuous bounded variation process such that $X_{s}\in%
\bar{G}$, $\forall s\in\left[ t,T\right] $ and system (\ref{reflected SDE})
is satisfied.
\end{definition}

The main results are the following two theorems. The first one consists in
establishing the weak uniqueness (in law) of the solution for (\ref%
{reflected SDE}) and the continuous dependence in law with respect to the
initial data.

\begin{theorem}
\label{Main result 2}Under the assumptions $\mathrm{(A}_{1}-\mathrm{A}_{2}%
\mathrm{)}$, there exists a unique weak solution $({X_{s}^{t,x},{K}%
_{s}^{t,x})}_{s\in\left[ t,T\right] }$ of SDE (\ref{reflected SDE}).
Moreover,%
\begin{equation*}
({X^{t,x,n},{K}^{t,x,n})}%
\xrightarrow[u]{\;\;\;\;*\;\; \;\;}%
({X^{t,x},{K}^{t,x})}
\end{equation*}
and the application%
\begin{equation*}
\left[ 0,T\right] \times\bar{G}\ni\left( t,x\right) \mapsto({X^{t,x},{K}%
^{t,x})}
\end{equation*}
is continuous in law.
\end{theorem}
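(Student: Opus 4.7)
The plan is (i) to derive uniform-in-$n$ moment bounds for $(X^{t,x,n},K^{t,x,n},V^{t,x,n})$, (ii) to prove tightness in the uniform topology and identify any weak limit as a solution of (\ref{reflected SDE}), (iii) to obtain weak uniqueness of the reflected SDE via the Stroock--Varadhan submartingale problem, and (iv) to deduce continuity in law with respect to $(t,x)$ from uniqueness together with uniform tightness.

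\textbf{Uniform estimates and tightness.} Boundedness of $b$ and $\sigma$ combined with the Burkholder--Davis--Gundy inequality gives $\sup_n\mathbb{E}\bigl[\sup_{s\in[t,T]}|V^{t,x,n}_s|^{p}\bigr]<\infty$ for every $p\geq 2$, uniformly for $(t,x)$ on compact sets. To bound the reflection term I would apply It\^{o}'s formula to $\ell(X^{t,x,n}_s)$ (extended smoothly off $G$) and exploit the compatibility condition $\langle\nabla\ell(x),\delta(x)\rangle\geq 0$ together with the lower bound $\langle\nabla\ell,\delta\rangle\sim|\delta|$ in a tubular neighbourhood of $\partial G$ (a consequence of the $C^{2}$ regularity and convexity of $G$); this yields $\sup_n\mathbb{E}\bigl[|k^{t,x,n}_{T}|^{2}\bigr]<\infty$ together with the key penalization estimate $\mathbb{E}\bigl[\sup_s\mathrm{dist}^{2}(X^{t,x,n}_s,\bar G)\bigr]=O(1/n)$. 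Tightness of $V^{t,x,n}$ in the uniform topology then follows from Kolmogorov's criterion, and since $\bar G$ is smooth and convex and the trajectories $X^{t,x,n}$ stay within $O(1/\sqrt{n})$ of $\bar G$, a standard stability result for the Skorohod problem (the reflection map being continuous in the uniform topology on $C([t,T];\mathbb{R}^d)$ for such domains) transfers this tightness to the full triple $(X^{t,x,n},K^{t,x,n},V^{t,x,n})$.

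\textbf{Identification of the limit and weak uniqueness (the main obstacle).} Along a subsequence, Skorohod representation provides a.s.\ uniformly convergent versions $(X^n,K^n,V^n,W^n)\to(X,K,V,W)$ on a new probability space. Continuity of $b,\sigma$ enables the passage to the limit in both the Lebesgue and stochastic integrals, while the structural identity $K^n_s=\int_t^s n\delta(X^n_r)\,dr$ combined with the fact that $\delta(x)$ is a non-negative multiple of $\nabla\ell(\pi_{\bar G}(x))$ in a neighbourhood of $\partial G$ yields the reflection conditions $X_s\in\bar G$, $K_s=\int_t^s\nabla\ell(X_r)\,d|K|_r$ and $|K|_s=\int_t^s\mathbb{1}_{\{X_r\in\partial G\}}\,d|K|_r$. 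The genuine difficulty is weak uniqueness under only continuous $b,\sigma$. For this I would invoke the Stroock--Varadhan submartingale problem attached to $(L,\partial/\partial n)$ on $\bar G$; under (A$_1$)--(A$_2$) and the $C^2$ regularity of $\partial G$, this problem is well-posed, uniquely determining the law of $X^{t,x}$. Since $V^{t,x}_s=x+\int_t^s b(X^{t,x}_r)dr+\int_t^s\sigma(X^{t,x}_r)dW_r$ is a Brownian functional of $X^{t,x}$ (using uniform ellipticity of $\sigma\sigma^{\ast}$ to recover $W$ from $\sigma^{-1}\,dV$) and $K^{t,x}=V^{t,x}-X^{t,x}$, the joint law of $(X^{t,x},K^{t,x})$ is likewise unique. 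Combined with the subsequential convergence, this delivers the claimed convergence $(X^{t,x,n},K^{t,x,n})\xrightarrow[u]{\,*\,}(X^{t,x},K^{t,x})$ of the whole sequence.

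\textbf{Continuity in the initial data.} The estimates of the preceding paragraphs are uniform for $(t,x)$ in compact subsets of $[0,T]\times\bar G$, so the family $\{(X^{t,x},K^{t,x})\}_{(t,x)}$ is tight in the uniform topology. For any $(t_m,x_m)\to(t,x)$, every subsequential weak limit of $(X^{t_m,x_m},K^{t_m,x_m})$ is, by the same identification argument, a weak solution of (\ref{reflected SDE}) with initial data $(t,x)$; by the weak uniqueness just established it must coincide in law with $(X^{t,x},K^{t,x})$. Hence the full family converges in law, which completes the proof of the continuity in law of $(t,x)\mapsto(X^{t,x},K^{t,x})$.
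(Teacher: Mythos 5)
Your overall architecture (penalization estimates, tightness, identification of limits via Skorohod representation, weak uniqueness through the Stroock--Varadhan submartingale problem, and continuity in the data from tightness plus uniqueness) matches the paper's, but two steps have genuine gaps. The first is your tightness transfer. You argue that because $X^{t,x,n}$ stays within $O(1/\sqrt{n})$ of $\bar G$ and the Skorohod reflection map is continuous in the uniform topology, tightness of $V^{t,x,n}$ passes to $(X^{t,x,n},K^{t,x,n})$. But $X^{t,x,n}$ is \emph{not} the reflection of $V^{t,x,n}$: it solves the penalized equation, and proximity of the paths to $\bar G$ gives no modulus-of-continuity control on $K^{t,x,n}_s=n\int_t^s\delta(X^{t,x,n}_r)\,dr$, whose density blows up with $n$. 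Continuity of the Skorohod map controls $\Gamma(V^{t,x,n})$, not $X^{t,x,n}$; estimating the gap between the penalized solution and the reflected one is precisely the hard point, and without it the Skorohod representation of the quadruple $(X^n,K^n,V^n,W^n)$ in $\mathcal{C}$ that your identification step relies on is unjustified. The paper circumvents this by first proving tightness only in Jakubowski's $\mathrm{S}$-topology via conditional-variation bounds, then Skorohod-representing the \emph{drivers} ($\hat V^n\to\hat V$ a.s.\ uniformly, legitimate since $V^n$ are continuous with uniformly controlled increments), comparing the penalized solutions driven by $\hat V^n$ and by $\hat V$ through a deterministic Tanaka-type inequality (\cite[Lemma 2.2]{la-sl/03}, \cite[Lemma 2.2]{ta/79}) --- which requires the total variations $|\hat K^n|_{[t,T]}$ and $|\bar K^n|_{[t,T]}$ bounded in probability, the latter via \cite[Theorem 2.7]{la-sl/03} --- and finally invoking the convergence of the penalized problem with a \emph{fixed} continuous driver to the Skorohod problem. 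Since you do have the variation bound, your outline can be repaired along these lines, but as written the transfer step fails.

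The second gap is in the uniqueness argument. Asserting that the submartingale problem attached to $(L,\partial/\partial n)$ is well-posed is not enough: you must show that a weak solution of (\ref{reflected SDE}) actually \emph{solves} that problem. It\^o's formula gives $f(s,X_s)=f(t,x)+\int_t^s\big(\frac{\partial f}{\partial r}+Lf\big)(r,X_r)\,dr-\int_t^s\langle\nabla_x f,\nabla\ell(X_r)\rangle\,dk_r+(\text{martingale})$, and to obtain the supermartingale property for test functions with $\langle\nabla_x f,\nabla\ell\rangle\geq0$ on $\partial G$ one must discard the portion of the Lebesgue integral carried by $\{X_r\in\partial G\}$, i.e.\ show the occupation time of the boundary is Lebesgue-null. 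This is where ellipticity $\mathrm{(A_2)}$ enters through Krylov's estimate for reflecting diffusions (\cite[Theorem 5.1]{ro-sl/97}); your proposal never addresses it. Relatedly, your recovery of the law of $K$ by reconstructing $W$ from $\sigma^{-1}\,dV$ is circular, since $V=X+K$ presupposes $K$; the correct observation (El Karoui, \cite[Theorem 6]{ka/75}, used in the paper) is that the martingale and bounded-variation parts of the continuous semimartingale $X$ are intrinsic, so $k$, hence $K_s=\int_t^s\nabla\ell(X_r)\,dk_r$, is a functional of $X$ alone and the law of $X$ determines the law of the couple $(X,K)$. The continuity-in-data paragraph is sound and matches the paper's argument, granted the uniqueness and tightness it presupposes.
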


Once this result for the forward part is established we then associate a
BSDE involving Stieltjes integral with respect to the increasing process $%
k^{t,x}$ in order to obtain the probabilistic representation for the
viscosity solution of PDE (\ref{PDE Neumann approx}).

The next result provides the approximation of a viscosity solution for
system (\ref{PDE Neumann}) by the solutions sequence of (\ref{PDE Neumann
approx}).

\begin{theorem}
\label{Main result 1}Under the assumptions $\mathrm{(A}_{1}-\mathrm{A}_{3}%
\mathrm{)}$, there exist continuous functions $u^{n}:[0,T]\times \mathbb{R}%
^{d}\rightarrow\mathbb{R}^{d}$ and $u:[0,T]\times\bar{G}\rightarrow\mathbb{R}%
^{d}$ such that $u^{n}$ is a viscosity solution for system (\ref{PDE Neumann
approx}), $u$ is a viscosity solution for system (\ref{PDE Neumann}) with
Neumann boundary conditions and, in addition,%
\begin{equation*}
\lim_{n\rightarrow\infty}u^{n}(t,x)=u(t,x),\;\forall\left( t,x\right)
\in\lbrack0,T]\times\bar{G}.
\end{equation*}
\end{theorem}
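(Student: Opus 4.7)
The plan is to represent both $u$ and $u^{n}$ probabilistically through appropriate BSDEs driven by the forward processes just analyzed. For $(t,x)\in[0,T]\times\bar{G}$ and $n\in\mathbb{N}^{\ast}$, I would consider the generalized BSDE
\begin{equation*}
Y_{s}^{t,x}=g(X_{T}^{t,x})+\int_{s}^{T}f(r,X_{r}^{t,x},Y_{r}^{t,x})\,dr+\int_{s}^{T}h(r,X_{r}^{t,x},Y_{r}^{t,x})\,dk_{r}^{t,x}-\int_{s}^{T}Z_{r}^{t,x}\,dW_{r},
\end{equation*}
and its penalized counterpart
\begin{equation*}
Y_{s}^{t,x,n}=g(X_{T}^{t,x,n})+\int_{s}^{T}f(r,X_{r}^{t,x,n},Y_{r}^{t,x,n})\,dr+\int_{s}^{T}h(r,X_{r}^{t,x,n},Y_{r}^{t,x,n})\,dk_{r}^{t,x,n}-\int_{s}^{T}Z_{r}^{t,x,n}\,dW_{r},
\end{equation*}
and then set $u(t,x):=Y_{t}^{t,x}$ and $u^{n}(t,x):=Y_{t}^{t,x,n}$. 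Under $\mathrm{(A_{3})}$, solvability of the generalized BSDE is covered by the Pardoux--Zhang theory, while that of the penalized one reduces to the classical Pardoux--Peng result (the measure $dk_{r}^{t,x,n}$ being absolutely continuous). A Blumenthal 0--1 type argument, combined with the weak uniqueness of $(X^{t,x},K^{t,x})$ provided by Theorem \ref{Main result 2}, ensures that $Y_{t}^{t,x}$ and $Y_{t}^{t,x,n}$ are deterministic constants depending only on $(t,x)$, so that $u$ and $u^{n}$ are well-defined functions.

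The continuity of $u$ and $u^{n}$ in the initial data reduces to stability of BSDEs under the convergence in law of the driving forward processes, which is precisely the content of Theorem \ref{Main result 2}. More concretely, given $(t_{k},x_{k})\to (t,x)$, I would apply Skorohod's representation theorem to extract a subsequence along which $(X^{t_{k},x_{k}},K^{t_{k},x_{k}})$ converges to $(X^{t,x},K^{t,x})$ uniformly and almost surely on a common probability space; a standard BSDE stability estimate---using the monotonicity in $y$ of $f$ and the Lipschitz property of $h$ and $g$---then forces $Y_{t_{k}}^{t_{k},x_{k}}\to Y_{t}^{t,x}$ in probability, and since the limit is deterministic, ordinary convergence follows. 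The pointwise limit $u^{n}(t,x)\to u(t,x)$ is obtained identically, starting from the convergence $(X^{t,x,n},K^{t,x,n})\xrightarrow[u]{\;\;\;\ast\;\;\;}(X^{t,x},K^{t,x})$ already proved.

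The main obstacle in this programme is the passage to the limit in the Stieltjes term $\int_{s}^{T}h(r,X^{n}_{r},Y^{n}_{r})\,dk^{n}_{r}$. While $(X^{n},K^{n})\to (X,K)$ uniformly almost surely, the measures $dk^{n}$ are absolutely continuous whereas $dk$ is typically supported on the random set $\{r:X_{r}\in\partial G\}$, so one cannot simply interchange limit and integral. I would handle this by rewriting the integrand through (\ref{def k^n}), exploiting the uniform convergence of $K^{n}$ to $K$ together with uniform $L^{2}$-bounds on $Y^{n}$ and a mollification of $h$, in the spirit of the standard penalization analysis for reflected SDE--BSDE systems. Once continuity of $u$ and the pointwise convergence are secured, the viscosity property of $u^{n}$ for (\ref{PDE Neumann approx}) is immediate from the nonlinear Feynman--Kac formula of Pardoux--Peng, and that of $u$ for (\ref{PDE Neumann}) follows from the Pardoux--Zhang characterization via the flow identity $Y_{s}^{t,x}=u(s,X_{s}^{t,x})$, which itself is a consequence of the weak uniqueness of $(X,K)$ and of the Markov structure of the reflected diffusion.
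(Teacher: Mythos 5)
Your overall architecture --- defining $u(t,x):=Y_{t}^{t,x}$ and $u^{n}(t,x):=Y_{t}^{t,x,n}$ through generalized BSDEs, deducing continuity from stability with respect to the initial data, obtaining $u^{n}\rightarrow u$ from the penalization limit of Theorem \ref{Main result 2}, and getting the viscosity property from the Pardoux--Zhang representation --- coincides with the paper's. But the central analytic step is not carried out, and it is exactly the step the paper flags as delicate. After the Skorohod representation of $(X^{t_{k},x_{k}},K^{t_{k},x_{k}})$ you invoke ``a standard BSDE stability estimate'' to force $Y_{t_{k}}^{t_{k},x_{k}}\rightarrow Y_{t}^{t,x}$. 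No such estimate is available here: in the weak formulation each $Y^{t_{k},x_{k}}$ must be solved in the filtration of its own forward process, with martingale part $\int U\,dM^{X^{t_{k},x_{k}}}$ as in (\ref{generalized BSDE}) rather than $\int Z\,dW$ against a fixed Brownian motion (otherwise $Y_{t}^{t,x}$ is not a functional of the law of $(X,K)$ and $u$ is not even well defined across weak solutions). The It\^{o} computation for $|Y^{t_{k},x_{k}}-Y^{t,x}|^{2}$ then requires both martingale parts to be martingales with respect to one common filtration, which fails a priori; moreover there is no reason for $Y^{t_{k},x_{k}}$ to converge uniformly or even in $J_{1}$ --- the paper works with Jakubowski's $\mathrm{S}$-topology precisely for this reason. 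The paper's actual route (Propositions \ref{result_Boufoussi} and \ref{cont of Y}) is: (i) conditional-variation bounds giving $\mathrm{S}$-tightness of $(Y,M,H)$; (ii) passage to the limit in the BSDE via \cite[Lemma 3.3]{bo-ca/04}, \cite[Corollary 2.11]{ja/97} and the stochastic Helly--Bray lemma \cite[Proposition 3.4]{za/08}, producing a limit $(\bar{Y},\bar{M},\bar{H})$; (iii) the key identification step: showing through the martingale problem that $\bar{M}$ and $M^{X^{t,x}}$ are martingales with respect to the \emph{same} filtration ${\mathcal{F}}^{X^{t,x},\bar{Y},\bar{M}}$, and only then running the It\^{o}/generalized Gronwall argument (with the monotonicity of $f$ and the Lipschitz property of $h$, cf. \cite[Lemma 12]{ma-ra/07}) to conclude $\bar{Y}=Y^{t,x}$ and $\bar{M}=M^{t,x}$. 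Your plan contains no substitute for (iii), and without it neither the continuity of $u$ nor $u^{n}(t,x)\rightarrow u(t,x)$ is proved.

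Several smaller points. Your BSDEs carry $+\int_{s}^{T}h\,dk_{r}$, while the paper's conventions (outward normal $\nabla\ell$, boundary condition $\partial u/\partial n+h=0$) require $-\int_{s}^{T}h\,dk_{r}$ as in (\ref{generalized BSDE}); with your sign you would represent the Neumann condition with $-h$. Mollifying $h$ to pass to the limit in the Stieltjes term is unnecessary: $h$ is globally Lipschitz by (A$_{3}$)(ii), and the genuine difficulty is the \emph{joint} convergence of $(Y^{n},k^{n})$, handled by \cite[Lemma 3.3]{bo-ca/04} together with the uniform convergence $k^{n}\rightarrow k$ from Lemma \ref{conv in C}; note also that $g$ is only continuous with polynomial growth, not Lipschitz, so your stability estimate could not use it as stated. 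The deterministic character and well-definedness of $Y_{t}^{t,x}$ come not from a Blumenthal argument but from the representation $Y_{s}^{t,x}=u(s,X_{s}^{t,x})$ of \cite[Theorem 3.4]{ka/97} combined with the weak uniqueness of $X$ (the paper's Remark \ref{Uniq of Y}) --- an idea you do invoke at the end, but which presupposes solving the BSDE in the filtration of $X$. Finally, ``immediate from Pardoux--Peng'' is too quick for the viscosity property: $b$ and $\sigma$ are merely continuous, so one must verify, as the paper does, that the argument of \cite[Theorem 4.3]{pa-zh/98} survives in this weak, continuous-coefficient setting.
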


\section{Proof of Theorem \protect\ref{Main result 2}}

We shall divide the proof of this Theorem into several lemmas. First of all
we recall that the existence of a weak solution is given, under assumption $%
\mathrm{(A}_{1}\mathrm{)}$, by \cite[Theorem 3.2]{li-sz/84}.

For the simplicity of presentation we suppress from now on the explicit
dependence on $\left( t,x\right) $ in the notation of the solution of (\ref%
{reflected SDE}) and (\ref{penalized reflected SDE}).

We first prove an estimation result for the solutions of the penalized SDE (%
\ref{penalized reflected SDE}).

\begin{lemma}
\label{a-priori estimation}Under assumption $\mathrm{(A}_{1}\mathrm{)}$, for
any $q\geq1$, there exists a constant $C>0$, depending only on $d,T$ and $q$%
, such that%
\begin{equation}
\mathbb{E}\left( \sup\nolimits_{s\in\left[ t,T\right] }|{X_{s}^{n}|}%
^{2q}\right) +\mathbb{E}\left( \sup\nolimits_{s\in\left[ t,T\right] }|{%
K_{s}^{n}|}^{2q}\right) +\mathbb{E}|{K}^{n}|_{\left[ t,T\right] }^{q}\leq
C,\;\forall n\in\mathbb{N}\,.  \label{bound for approx}
\end{equation}
\end{lemma}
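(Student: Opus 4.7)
The idea is to extract the three estimates in (\ref{bound for approx}) from two applications of It\^o's formula, together with a direct reading of the equation for $K^n$. Boundedness of $b$ and $\sigma$ supplies the baseline control; convexity of $\bar G$ (via the subdifferential inequality (\ref{subdif ineq})) handles the sign of the penalization term $-n\delta$; and an interior-ball refinement will be needed to obtain the total-variation estimate.

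For $\mathbb{E}\sup_{s\in[t,T]}|X_s^n|^{2q}$, fix any $z\in\bar G$ and apply It\^o's formula to $|X_s^n-z|^{2q}$. The only term depending on $n$,
$$-2qn\int_t^s|X_r^n-z|^{2q-2}\langle X_r^n-z,\delta(X_r^n)\rangle\,dr,$$
is nonpositive by (\ref{subdif ineq}) and may be discarded. Boundedness of $b,\sigma$, Burkholder--Davis--Gundy and Gronwall then yield $\mathbb{E}\sup_s|X_s^n-z|^{2q}\leq C$, hence the desired bound since $\bar G$ is bounded.

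The main obstacle is the total-variation bound, since from (\ref{def k^n}) one has $|K^n|_{[t,T]}=\int_t^T n|\delta(X_r^n)|\,dr$, and controlling it requires a ``linear-in-$|\delta|$'' coercivity rather than the quadratic one directly available from (\ref{subdif ineq}). The plan is to exploit an interior-ball condition: since $G$ is bounded, convex and of class $C^2$, pick $x_0\in G$ and $r_0>0$ with $\overline{B(x_0,r_0)}\subset G$; the supporting hyperplane at any $\pi\in\partial G$ has $\nabla\ell(\pi)$ as outward unit normal, so $\langle\pi-x_0,\nabla\ell(\pi)\rangle\geq r_0$. For $x\notin\bar G$, setting $\pi:=\pi_{\bar G}(x)\in\partial G$, the identities $\delta(x)=2(x-\pi)$ and $(x-\pi)/|x-\pi|=\nabla\ell(\pi)$ give
$$\langle x-x_0,\delta(x)\rangle=2|x-\pi|^2+2|x-\pi|\langle\pi-x_0,\nabla\ell(\pi)\rangle\geq\tfrac12|\delta(x)|^2+r_0|\delta(x)|,\qquad x\in\mathbb{R}^d.$$
Applying It\^o's formula to $|X_s^n-x_0|^2$ and using this inequality to absorb the term $-2n\langle X^n-x_0,\delta(X^n)\rangle$ produces the pathwise estimate
$$2r_0\int_t^s n|\delta(X_r^n)|\,dr\leq|x-x_0|^2+C\int_t^s|X_r^n-x_0|\,dr+CT+|M_s|,$$
with $M$ a square-integrable martingale. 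Raising to the $q$-th power, taking $\sup_s$ and then expectation, and invoking Step~1 together with BDG on $M$, gives $\mathbb{E}|K^n|_{[t,T]}^q\leq C$.

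Finally, $\mathbb{E}\sup_s|K_s^n|^{2q}$ is free from (\ref{penalized reflected SDE}): $K_s^n=x-X_s^n+\int_t^s b(X_r^n)\,dr+\int_t^s\sigma(X_r^n)\,dW_r$, so the Step~1 bound on $\sup|X^n|^{2q}$ together with a direct BDG estimate controls each term (alternatively this is subsumed by Step~2 at exponent $2q$, since $|K_s^n|\leq|K^n|_{[t,s]}$). I expect Step~2 to be the main difficulty: identifying the right coercivity inequality from the geometry of $G$, and then carefully carrying out the pointwise majorization before taking $q$-th powers and expectations so as to control the full supremum in $s$ rather than merely the value at~$T$.
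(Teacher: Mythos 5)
Your proof is correct, and its skeleton coincides with the paper's: It\^{o}'s formula plus the convexity inequality (\ref{subdif ineq}) for the moment bound on $X^{n}$, an interior-ball argument for the total-variation bound, and BDG plus Gronwall throughout. The one genuine difference is in how the interior ball is exploited in your Step 2. The paper keeps the nonnegative term ${\int_{t}^{s}}\left\langle X_{r}^{n},dK_{r}^{n}\right\rangle$ on the left-hand side of its first energy inequality, so the bound $\mathbb{E}\big({\int_{t}^{T}}\left\langle X_{r}^{n},dK_{r}^{n}\right\rangle \big)^{q}\leq C$ comes for free once $\mathbb{E}\sup_{s}|X_{s}^{n}|^{2q}$ is controlled; it then converts this into the total-variation bound via the pathwise increment inequality $\varepsilon \left\vert K_{v}^{n}-K_{u}^{n}\right\vert \leq {\int_{u}^{v}}\left\langle X_{r}^{n},dK_{r}^{n}\right\rangle$, obtained by inserting the random point $z=\varepsilon \left( K_{v}^{n}-K_{u}^{n}\right) /\left\vert K_{v}^{n}-K_{u}^{n}\right\vert \in G$ into (\ref{subdif ineq}). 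You instead prove the deterministic pointwise coercivity $\left\langle x-x_{0},\delta \left( x\right) \right\rangle \geq \frac{1}{2}\left\vert \delta \left( x\right) \right\vert ^{2}+r_{0}\left\vert \delta \left( x\right) \right\vert$ --- whose linear part is exactly (\ref{subdif ineq}) with $z=x_{0}+r_{0}\,\delta \left( x\right) /\left\vert \delta \left( x\right) \right\vert$, so your supporting-hyperplane derivation is a reformulation of the same geometric fact --- and rerun It\^{o} on $\left\vert X^{n}-x_{0}\right\vert ^{2}$, bounding $n{\int_{t}^{T}}\left\vert \delta \left( X_{r}^{n}\right) \right\vert dr=\left\vert K^{n}\right\vert _{\left[ t,T\right] }$ at the level of the integrand rather than of increments. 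Both are sound: your route is more self-contained at the total-variation step (no random choice of $z$, no partition argument), while the paper's is more economical, recycling its single energy inequality. Two small remarks: for full rigor both your Step 1 and Step 2 require localization by stopping times $\tau _{m}=\inf \left\{ s:\left\vert X_{s}^{n}\right\vert \geq m\right\} \wedge T$ before applying Gronwall and BDG (the paper does this explicitly; you elide it --- a routine technicality, not a gap), and your closing observation that $\mathbb{E}\sup_{s}\left\vert K_{s}^{n}\right\vert ^{2q}$ follows from $\left\vert K_{s}^{n}\right\vert \leq \left\vert K^{n}\right\vert _{\left[ t,T\right] }$ and the variation bound at exponent $2q$ is also how the paper's statement is (implicitly) completed.
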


\begin{proof}
Without loss of generality we can assume that $0\in G$. From It\^{o}'s
formula applied for $|{X_{s}^{n}|}^{2}$ it can be deduced that%
\begin{equation*}
\begin{array}{r}
|{X_{s}^{n}|}^{2}+2{\displaystyle\int_{t}^{s}}\left\langle {X_{r}^{n}},d{K}%
_{r}^{n}\right\rangle =\left\vert x\right\vert ^{2}+2{\displaystyle%
\int_{t}^{s}}\left\langle {X_{r}^{n}},b\left( {X}_{r}^{n}\right)
\right\rangle dr+2{\displaystyle\int_{t}^{s}}\left\langle {X_{r}^{n}}%
,\sigma\left( {X}_{r}^{n}\right) {dW_{r}}\right\rangle \medskip \\
{+\displaystyle\int_{t}^{s}}|{\sigma({X_{r}^{n})|}}^{2}{dr,\;s\in}\left[ t,T%
\right] .%
\end{array}%
\end{equation*}
Write $\tau_{m}:=\inf\left\{ s\in\left[ t,T\right] :|{X_{s}^{n}|}\geq
m\right\} \wedge T$, $m\in\mathbb{N}^{\ast}$ and by the above,%
\begin{equation*}
\begin{array}{r}
|{X_{s\wedge\tau_{m}}^{n}|}^{2}+2{\displaystyle\int_{t}^{s\wedge\tau_{m}}}%
\left\langle {X_{r}^{n}},d{K}_{r}^{n}\right\rangle \leq C+\left\vert
x\right\vert ^{2}+C{\displaystyle\int_{t}^{s\wedge\tau_{m}}}\left\vert {%
X_{r}^{n}}\right\vert dr+2{\displaystyle\int_{t}^{s\wedge\tau_{m}}}%
\left\langle {X_{r}^{n}},\sigma\left( {X}_{r}^{n}\right) {dW_{r}}%
\right\rangle ,\medskip \\
{s\in}\left[ t,T\right] .%
\end{array}%
\end{equation*}
Here and in what follows $C>0$ will denote a generic constant which is
allowed to vary from line to line.

Therefore%
\begin{equation*}
\begin{array}{r}
\left( |{X_{s\wedge \tau _{m}}^{n}|}^{2}+{\displaystyle\int_{t}^{s\wedge
\tau _{m}}}\left\langle {X_{r}^{n}},d{K}_{r}^{n}\right\rangle \right)
^{q}\leq C\left( 1+\left\vert x\right\vert ^{2q}\right) +C\left( {%
\displaystyle\int_{t}^{s\wedge \tau _{m}}}\left\vert {X_{r}^{n}}\right\vert
^{2}dr\right) ^{q}\medskip \\
+C\left\vert {\displaystyle\int_{t}^{s\wedge \tau _{m}}}\left\langle {%
X_{r}^{n}},\sigma \left( {X}_{r}^{n}\right) {dW_{r}}\right\rangle
\right\vert ^{q},%
\end{array}%
\end{equation*}%
and%
\begin{equation}
\begin{array}{l}
\mathbb{E}\sup_{r\in \left[ t,s\right] }\left( |{X_{r\wedge \tau _{m}}^{n}|}%
^{2}+{\displaystyle\int_{t}^{r\wedge \tau _{m}}}\left\langle {X_{u}^{n}},d{K}%
_{u}^{n}\right\rangle \right) ^{q}\leq C\,\left( 1+\left\vert x\right\vert
^{2q}\right) \medskip \\
\quad +C\,\mathbb{E}{\displaystyle\int_{t}^{s\wedge \tau _{m}}}\sup_{u\in %
\left[ t,r\right] }\left\vert {X_{u}^{n}}\right\vert ^{2q}dr\medskip +C\,%
\mathbb{E}\sup_{r\in \left[ t,s\right] }\left\vert {\displaystyle%
\int_{t}^{r\wedge \tau _{m}}}\left\langle {X_{u}^{n}},\sigma \left( {X}%
_{u}^{n}\right) {dW_{u}}\right\rangle \right\vert ^{q}.%
\end{array}
\label{energy ineq}
\end{equation}%
By Burkholder-Davis-Gundy inequality we deduce%
\begin{equation*}
\begin{array}{r}
\mathbb{E}\sup_{r\in \left[ t,s\right] }\left\vert {\displaystyle%
\int_{t}^{r\wedge \tau _{m}}}\left\langle {X_{u}^{n}},\sigma \left( {X}%
_{u}^{n}\right) {dW_{u}}\right\rangle \right\vert ^{q}\leq C\,\mathbb{E}%
\left\vert {\displaystyle\int_{t}^{s\wedge \tau _{m}}}\left\vert {X_{u}^{n}}%
\right\vert ^{2}\left\vert \sigma \left( {X}_{u}^{n}\right) \right\vert ^{2}{%
du}\right\vert ^{q/2}\medskip \\
\leq C\,\mathbb{E}\left\vert {\displaystyle\int_{t}^{s\wedge \tau _{m}}}%
\left\vert {X_{u}^{n}}\right\vert ^{2}{du}\right\vert ^{q/2}\leq C\,\left( 1+%
\mathbb{E}{\displaystyle\int_{t}^{s\wedge \tau _{m}}}\sup_{u\in \left[ t,r%
\right] }\left\vert {X_{u}^{n}}\right\vert ^{2q}{dr}\right) ,%
\end{array}%
\end{equation*}%
and (\ref{energy ineq}) yields%
\begin{equation*}
\mathbb{E}\sup\nolimits_{r\in \left[ t,s\wedge \tau _{m}\right] }|{X_{r}^{n}|%
}^{2q}\leq C\,\left( 1+\left\vert x\right\vert ^{2q}+\mathbb{E}{\displaystyle%
\int_{t}^{s}}\sup\nolimits_{u\in \left[ t,r\wedge \tau _{m}\right]
}\left\vert {X_{u}^{n}}\right\vert ^{2q}dr\right) ,\;\forall s\in \left[ t,T%
\right] ,
\end{equation*}%
since from (\ref{subdif ineq}) applied for $z=0\in G$, we have%
\begin{equation*}
{\int_{t}^{s}}\left\langle {X_{r}^{n}},d{K}_{r}^{n}\right\rangle =n{%
\int_{t}^{s}}\left\langle {X_{r}^{n}},{\delta }({X_{r}^{n})}\right\rangle
dr\geq 0.
\end{equation*}%
From the Gronwall lemma,%
\begin{equation*}
\mathbb{E}\sup\nolimits_{r\in \left[ t,s\wedge \tau _{m}\right] }|{X_{r}^{n}|%
}^{2q}\leq C\,\left( 1+\left\vert x\right\vert ^{2q}\right) ,\;\forall
\,n\in \mathbb{N}\,.
\end{equation*}%
Taking $m\rightarrow \mathbb{\infty }$ it follows that%
\begin{equation}
\mathbb{E}\sup\nolimits_{r\in \left[ t,T\right] }|{X_{r}^{n}|}^{2q}\leq
C,\;\forall \,n\in \mathbb{N}\,.  \label{bound for X^n}
\end{equation}%
Once again from (\ref{energy ineq}) and (\ref{bound for X^n}) we obtain%
\begin{equation*}
\mathbb{E}\left( {\int_{t}^{T}}\left\langle {X_{r}^{n}},d{K}%
_{r}^{n}\right\rangle \right) ^{q}\leq C\,\left( 1+\left\vert x\right\vert
^{2q}\right) .
\end{equation*}%
We have that there exists $\varepsilon >0$ such that the ball $\bar{B}\left(
0,\varepsilon \right) \subset G$, and, for $z=\varepsilon \frac{{K}_{s}^{n}-{%
K}_{t}^{n}}{\left\vert {K}_{s}^{n}-{K}_{t}^{n}\right\vert }\in G$,
inequality (\ref{subdif ineq}) becomes%
\begin{equation*}
\varepsilon \left\vert {K}_{v}^{n}-{K}_{u}^{n}\right\vert \leq {\int_{u}^{v}}%
\left\langle {X_{r}^{n}},d{K}_{r}^{n}\right\rangle ,\;\forall t\leq u\leq
v\leq T
\end{equation*}%
and by the definition of total variation of ${K}^{n},$ it follows that%
\begin{equation*}
\varepsilon ^{q}\mathbb{E}\left( \left\vert {K}^{n}\right\vert _{\left[ t,T%
\right] }^{q}\right) \leq \mathbb{E}\Big({\int_{t}^{T}}\left\langle {%
X_{r}^{n}},d{K}_{r}^{n}\right\rangle \Big)^{q}\leq C~.
\end{equation*}%
\hfill
\end{proof}

\begin{lemma}
Under assumption $\mathrm{(A}_{1}\mathrm{)}$ the sequence $({X_{s}^{n},{K}%
_{s}^{n},{k}_{s}^{n})}_{s\in\left[ t,T\right] }$ is tight with respect to
the $\mathrm{S}$-topology.
\end{lemma}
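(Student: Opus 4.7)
The plan is to invoke Jakubowski's criterion for S-tightness in $\mathcal{D}([t,T])$: a sequence of real-valued c\`{a}dl\`{a}g processes is S-tight as soon as their sup-norms are tight in $\mathbb{R}$ and their conditional variations (equivalently, expected numbers of up-crossings of any nontrivial interval) are uniformly bounded in $n$. Since S-tightness for vector-valued processes reduces to coordinatewise S-tightness in Jakubowski's framework, it suffices to verify these two properties for each of the $2d+1$ real components of $(X^n,K^n,k^n)$ separately. The sup-norm part is handled at once by Lemma \ref{a-priori estimation}, together with the pointwise estimate $|k^n_s|\leq\|\nabla\ell\|_\infty\,|K^n|_{[t,T]}$.

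For the variation bounds I would treat the three processes in turn. The process $k^n$ is nondecreasing thanks to the compatibility condition $\langle\nabla\ell(x),\delta(x)\rangle\geq 0$ fixed in Section 2, and a monotone process bounded in $L^1$ is automatically S-tight (the number of up-crossings is at most one). For $K^n$, the estimate $\mathbb{E}|K^n|_{[t,T]}\leq C$ furnished by Lemma \ref{a-priori estimation} directly controls the conditional variation of each coordinate, since the finite-variation property makes the conditional variation bounded by the total variation. For $X^n$, its canonical semimartingale decomposition
\[
X^n_s \;=\; x+\int_t^s\bigl[b(X^n_r)-n\delta(X^n_r)\bigr]\,dr+\int_t^s\sigma(X^n_r)\,dW_r
\]
splits into a martingale part (which drops out of the conditional variation) and an absolutely continuous finite-variation part whose total variation on $[t,T]$ is dominated by $T\|b\|_\infty+|K^n|_{[t,T]}$, hence is uniformly bounded in $L^1$ by the same lemma.

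The only delicate point is the apparent blow-up of the penalization drift $n\delta(X^n)$, which naively threatens any uniform variation bound. This is exactly neutralized by the observation that its time-integral is precisely $K^n$, whose total variation is uniformly bounded in $L^q$ thanks to the test-point argument $z=\varepsilon K^n/|K^n|\in G$ used in Lemma \ref{a-priori estimation}; in other words, the pointwise unboundedness of the drift is already captured, in integrated form, by the process $K^n$ itself. Once coordinatewise S-tightness is established, joint S-tightness of the $\mathbb{R}^{2d+1}$-valued process $(X^n,K^n,k^n)$ follows directly, completing the proof.
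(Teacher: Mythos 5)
Your proposal is correct and takes essentially the same route as the paper: the sup-norm plus conditional-variation criterion (the paper cites Lejay's Appendix A, which is the Jakubowski-type condition you invoke), with all bounds supplied by Lemma \ref{a-priori estimation}, the monotonicity of $k^{n}$, the zero conditional variation of the martingale part, and the key observation that the penalized drift integrates exactly to $K^{n}$, whose total variation is uniformly controlled. The paper merely packages your direct estimate on the drift as the decomposition $X^{n}=V^{n}-K^{n}$ and bounds $\mathrm{CV}_{T}(X^{n})\leq \mathrm{CV}_{T}(V^{n})+\mathrm{CV}_{T}(K^{n})$, which amounts to the same computation.
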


\begin{proof}
In order to obtain the $\mathrm{S}$-tightness of a sequence of integrable c%
\`{a}dl\`{a}g processes $U^{n}$, $n\geq 1$, we shall use the sufficient
condition given e.g. in \cite[Appendix A]{le/02} which consists in proving
the uniform boundedness for:%
\begin{equation*}
\mathrm{CV}_{T}\left( U^{n}\right) +\mathbb{E}{\Big(}\sup_{s\in \lbrack
t,T]}\left\vert U_{s}^{n}\right\vert {\Big)},
\end{equation*}%
where%
\begin{equation}
\mathrm{CV}_{T}(U^{n}):=\sup_{\pi }{\sum_{i=0}^{m-1}{\mathbb{E}}\Big[{\big|}{%
\mathbb{E}}[U_{t_{i+1}}^{n}-U_{t_{i}}^{n}/{\mathcal{F}}_{t_{i}}]{\big|}\Big]}
\label{def cond var}
\end{equation}%
defines the conditional variation of $U^{n}$, with the supremum taken over
all finite partitions $\pi :t=t_{0}<t_{1}<\cdots <t_{m}=T.$

Using Lemma \ref{a-priori estimation}, we deduce that there exists a
constant $C>0$ such that for every $n\in\mathbb{N}^{\ast}$%
\begin{equation*}
\mathrm{CV}_{T}\left( K^{n}\right) +\mathbb{E}{\Big(}\sup_{s\in\lbrack
t,T]}\left\vert K_{s}^{n}\right\vert {\Big)}\leq\mathbb{E}\left( \left\vert
K^{n}\right\vert _{[t,T]}\right) +\mathbb{E}{\Big(}\sup_{s\in\lbrack
t,T]}\left\vert K_{s}^{n}\right\vert {\Big)}\leq C.
\end{equation*}
Since $k^{n}$ is increasing and $l\in C_{b}^{2}\left( \mathbb{R}^{d}\right) $%
, then there exist constants $M,C>0$ such that for every $n\in\mathbb{N}%
^{\ast}$
\begin{align*}
\mathrm{CV}_{T}\left( k^{n}\right) +\mathbb{E}{\Big(}\sup\limits_{s\in
\lbrack t,T]}\left\vert k_{s}^{n}\right\vert {\Big)} & \leq2\mathbb{E}\left(
k_{T}^{n}\right) =2\mathbb{E}{\Big(\int_{t}^{T}\left\langle \nabla\ell({%
X_{r}^{n})},dK_{r}^{n}\right\rangle \Big)} \\
& \leq2\mathbb{E}{\Big(\sup\limits_{s\in\lbrack t,T]}\left\vert \nabla \ell({%
X_{s}^{n})}\right\vert \cdot\left\vert K^{n}\right\vert _{\left[ t,T\right] }%
\Big)} \\
& \leq2M\,\mathbb{E}\left( {\left\vert K^{n}\right\vert _{\left[ t,T\right] }%
}\right) \\
& \leq2M\,C.
\end{align*}
By the definition of $V^{n}$, assumption $\mathrm{(A}_{1}\mathrm{)}$ and the
fact the conditional variation of a martingale is $0$, we obtain for each $%
n\in\mathbb{N}^{\ast}$,
\begin{align*}
\mathrm{CV}_{T}\left( V^{n}\right) & \leq\mathrm{CV}_{T}\left( {%
\int_{t}^{\cdot}b({X_{r}^{t,x,n})}dr}\right) +\mathrm{CV}_{T}\left( {%
\int_{t}^{\cdot}\sigma({X_{r}^{t,x,n})}dW_{r}}\right) \\
& =\mathrm{CV}_{T}\left( {\int_{t}^{\cdot}b({X_{r}^{t,x,n})}dr}\right) \leq%
\mathbb{E}\left( {\int_{t}^{T}}\left\vert {b({X_{r}^{t,x,n})}}\right\vert {dr%
}\right) \\
& \leq\left( T-t\right) \,M\leq C
\end{align*}
Therefore (see also Lemma \ref{a-priori estimation}), there exists $C>0$
such that for every $n\in\mathbb{N}^{\ast}$
\begin{equation*}
\mathrm{CV}_{T}\left( X^{n}\right) +\mathbb{E}{\Big(}\sup_{s\in\lbrack
t,T]}\left\vert X_{s}^{n}\right\vert {\Big)}\leq\mathrm{CV}_{T}\left(
V^{n}\right) +\mathrm{CV}_{T}\left( K^{n}\right) +\mathbb{E}{\Big(}%
\sup_{s\in\lbrack t,T]}\left\vert X_{s}^{n}\right\vert {\Big)}\leq C.
\end{equation*}
\hfill
\end{proof}

\begin{lemma}
\label{Uniq of X}Under the assumptions $\mathrm{(A}_{1}\mathrm{-A}_{2}%
\mathrm{)}$, the uniqueness in law of the stochastic process $(X_{s})_{s\in%
\left[ t,T\right] }$ holds.
\end{lemma}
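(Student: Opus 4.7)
The plan is to reduce the uniqueness in law of $(X_s)_{s\in \left[ t,T\right]}$ to the well-posedness of the associated Stroock-Varadhan submartingale problem on $\bar G$ with normal reflection. Given any weak solution $(X,K,W)$ of (\ref{reflected SDE}), It\^o's formula applied with a test function $\varphi \in C^2(\bar G)$, together with (\ref{def k}) which allows the Stieltjes integral against $K$ to be rewritten in terms of $k$, yields
\begin{equation*}
\varphi(X_s)-\varphi(x)-\int_t^s L\varphi(X_r)\,dr-\int_t^s \langle \nabla\varphi(X_r),\nabla\ell(X_r)\rangle\,dk_r = \int_t^s \langle \nabla\varphi(X_r),\sigma(X_r)\,dW_r\rangle.
\end{equation*}
Restricting to $\varphi$ with $\langle \nabla\varphi,\nabla\ell\rangle \leq 0$ on $\partial G$, the left-hand side is a submartingale, placing $X$ in the classical reflected submartingale framework. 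I would then invoke the well-known well-posedness of this problem on a $C^2$ convex bounded domain under (A$_1$)--(A$_2$) (Stroock-Varadhan, and in the reflected setting Lions-Sznitman): continuous bounded coefficients together with uniform ellipticity of $\sigma\sigma^*$ pin down the law of $X$ uniquely for each starting pair $(t,x)$.

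A self-contained alternative more in the spirit of the paper's penalization method is via Girsanov. Under (A$_2$) the field $\theta:=\sigma^{*}(\sigma\sigma^*)^{-1}b$ is bounded (by (A$_1$)), so $Z_s:=\exp\bigl(-\int_t^s \theta(X_r)^{*}dW_r-\tfrac12 \int_t^s|\theta(X_r)|^2 dr\bigr)$ is a genuine martingale by Novikov, and $K$, being of finite variation, is not affected by the measure change. Setting $d\tilde{\mathbb P}=Z_T\,d\mathbb P$, the process $X$ under $\tilde{\mathbb P}$ solves the driftless reflected SDE $X_{\cdot}+K_{\cdot}=x+\int_t^{\cdot}\sigma(X_r)\,d\tilde W_r$, and uniqueness in law in this reduced setting (continuous elliptic $\sigma$, normal reflection on a $C^2$ convex domain) follows again from classical results; transferring back via Girsanov yields the uniqueness in law for (\ref{reflected SDE}).

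The principal obstacle I anticipate is the bookkeeping between the three formulations involved: the notion of weak solution as stated in the Definition, the submartingale-problem reformulation, and the penalized-SDE characterization whose law is uniquely determined by \cite{ka-sh/88} under (A$_1$)--(A$_2$). In particular, the reflection process $K$ and its total variation $k$ must be shown to transform consistently (under Girsanov they are unchanged since they are of finite variation, and in the submartingale problem they appear only through the Stieltjes term against $dk$). Once these correspondences are verified, the classical uniqueness result applies and delivers the lemma.
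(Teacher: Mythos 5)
Your main route is the same as the paper's (It\^o's formula plus the Stroock--Varadhan sub/supermartingale problem, the paper using the sign convention $\langle\nabla_x f,\nabla\ell\rangle\geq 0$ and supermartingales, which is immaterial), but it has a genuine gap at the identification step. The submartingale problem whose uniqueness \cite[Theorem 5.7]{st-va/71} asserts is formulated with the indicator inside the time integral: one must show that $\varphi(X_s)-\int_t^s L\varphi(X_r)\,\mathbb{1}_{\{X_r\in G\}}\,dr$ is a submartingale (and with time-dependent test functions $f\in C^{1,2}([0,T]\times\bar G)$, not just $\varphi\in C^2(\bar G)$). Your It\^o computation yields the submartingale property only for $\varphi(X_s)-\int_t^s L\varphi(X_r)\,dr$; the discrepancy $\int_t^s L\varphi(X_r)\,\mathbb{1}_{\{X_r\in\partial G\}}\,dr$ has no sign, so you cannot conclude that a weak solution of (\ref{reflected SDE}) solves the Stroock--Varadhan problem unless you prove that the Lebesgue time spent by $X$ on $\partial G$ is a.s.\ null. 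With $b,\sigma$ merely continuous this is exactly where the paper invokes Krylov's inequality for reflecting diffusions \cite[Theorem 5.1]{ro-sl/97}: ellipticity (A$_2$) gives $\mathbb{E}\int_t^s\big|\big(\tfrac{\partial f}{\partial r}+Lf\big)(r,X_r)\big|\,\mathbb{1}_{\{X_r\in\partial G\}}\,dr\leq C\big(\int_t^s\int_G \det(\sigma\sigma^*)^{-1}\,(\cdot)^{d+1}\,\mathbb{1}_{\partial G}\,dr\,dx\big)^{1/(d+1)}=0$, since $\partial G$ is Lebesgue-null. Your proposal never addresses this occupation-time point, and it is the actual crux of the proof (it is also the only place where (A$_2$) is used in establishing the supermartingale characterization).

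Your Girsanov alternative does not repair this; it is essentially circular. After removing the drift you still face uniqueness in law for a normally reflected SDE with merely continuous, uniformly elliptic $\sigma$, and that is not ``classical'' in the sense you claim: Lions--Sznitman \cite{li-sz/84} provide \emph{existence} of weak solutions (which is how the paper uses them), while their uniqueness statements require Lipschitz coefficients. At this regularity the only available uniqueness mechanism is again the Stroock--Varadhan sub/supermartingale problem, so the Girsanov route leads back to the same missing boundary occupation-time argument rather than around it.
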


\begin{proof}
Let $\big(\Omega ,\mathcal{F},\mathbb{P},\left\{ \mathcal{F}_{t}\right\}
_{t\geq 0},W,X,K\big)$ be a weak solution of (\ref{reflected SDE}) and $f\in
C^{1,2}\left( \left[ 0,T\right] \times \bar{G}\right) $. We apply It\^{o}'s
formula to $f\left( s,X_{s}\right) $:%
\begin{equation}
\begin{array}{l}
\displaystyle f\left( s,X_{s}\right) =f\left( t,x\right) +{\int_{t}^{s}\Big(}%
\frac{\partial f}{\partial r}+Lf{\Big)}\left( r,X_{r}\right) dr-{\int_{t}^{s}%
}\left\langle \nabla _{x}f\left( r,X_{r}\right) ,\nabla \ell \left(
X_{r}\right) \right\rangle dk_{r}\medskip \\
\quad \quad \quad \quad \quad +{\displaystyle\int_{t}^{s}}\left\langle
\nabla _{x}f\left( r,X_{r}\right) ,\sigma \left( X_{r}\right)
dW_{r}\right\rangle .%
\end{array}
\label{Ito uniqueness}
\end{equation}%
Since $\sigma \sigma ^{\ast }$ is supposed to be invertible, we deduce,
using Krylov's inequality for the reflecting diffusions (see \cite[Theorem
5.1]{ro-sl/97}), that for any $s\in \left[ t,T\right] ,$%
\begin{equation*}
\begin{array}{l}
\mathbb{E}\displaystyle{\int_{t}^{s}\Big|\Big(}\frac{\partial f}{\partial r}%
+Lf{\Big)}\left( r,X_{r}\right) {\Big|}\mathbb{1}_{\left\{ X_{r}\in \partial
G\right\} }dr\medskip \\
\quad \leq C~\left( \displaystyle\int_{t}^{s}\int_{G}\det \left( \sigma
\sigma ^{\ast }\right) ^{-1}{\Big(}\frac{\partial f}{\partial r}+Lf{\Big)}%
^{d+1}\mathbb{1}_{\partial G}~{drdx}\right) ^{\frac{1}{d+1}}=0~{.}%
\end{array}%
\end{equation*}%
and equality (\ref{Ito uniqueness}) becomes%
\begin{equation*}
\begin{array}{l}
f\left( s,X_{s}\right) =f\left( t,x\right) +{\displaystyle\int_{t}^{s}\Big(}%
\frac{\partial f}{\partial r}+Lf{\Big)}\left( r,X_{r}\right) \mathbb{1}%
_{\left\{ X_{r}\in G\right\} }dr-{\displaystyle\int_{t}^{s}}\left\langle
\nabla _{x}f\left( r,X_{r}\right) ,\nabla \ell \left( X_{r}\right)
\right\rangle dk_{r}\medskip \\
\quad \quad \quad \quad +{\displaystyle\int_{t}^{s}}\left\langle \nabla
_{x}f\left( r,X_{r}\right) ,\sigma \left( X_{r}\right) dW_{r}\right\rangle
~,\;\mathbb{P}\text{-a.s.}%
\end{array}%
\end{equation*}%
Therefore
\begin{equation*}
f\left( s,X_{s}\right) -f\left( t,x\right) -{\displaystyle\int_{t}^{s}\Big(}%
\frac{\partial f}{\partial r}+Lf{\Big)}\left( r,X_{r}\right) \mathbb{1}%
_{\left\{ X_{r}\in G\right\} }dr
\end{equation*}%
is a $\mathbb{P}$-supermartingale whenever $f\in C^{1,2}\left( \left[ 0,T%
\right] \times \bar{G}\right) $ satisfies%
\begin{equation*}
\left\langle \nabla _{x}f\left( s,x\right) ,\nabla \ell \left( x\right)
\right\rangle \geq 0,\forall x\in \partial G.
\end{equation*}%
From \cite[Theorem 5.7]{st-va/71} (applied with $\phi =-\ell $, $\gamma
:=\nabla \phi $ and $\rho :=0$) we have that the solution to the
supermartingale problem is unique for each starting point $\left( t,x\right)
$, therefore our solution process $\left( X_{s}\right) _{s\in \left[ t,T%
\right] }$ is unique in law.\hfill
\end{proof}

\begin{remark}
\label{Uniq of couple}Following the remark of El Karoui \cite[Theorem 6]%
{ka/75} we obtain the uniqueness in law of the couple $\left( X,K\right) $,
since the increasing process $k$ depends only on the solution $X$ (and not
on the Brownian motion). The uniqueness is essential in order to formulate
the issue of the continuity with respect to the initial data.
\end{remark}

\begin{lemma}
\label{conv in C}We suppose that the assumptions $\mathrm{(A}_{1}-\mathrm{A}%
_{2}\mathrm{)}$ are satisfied. Then%
\begin{equation*}
\begin{array}{rl}
\left( i\right) & ({X^{n},{K}^{n})}%
\xrightarrow[u]{\;\;\;\;*\;\; \;\;}%
({X,{K})},\medskip \\
\left( ii\right) & {k^{n}}%
\xrightarrow[u]{\;\;\;*\; \;\;}%
k.%
\end{array}%
\end{equation*}
\end{lemma}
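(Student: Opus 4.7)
The plan is to combine the $\mathrm{S}$-tightness from the previous lemma with the weak uniqueness established in Lemma \ref{Uniq of X} and Remark \ref{Uniq of couple}, and then to upgrade $\mathrm{S}$-convergence to uniform convergence by exploiting the continuity of the limit. First I would extract, from the $\mathrm{S}$-tight sequence $(X^{n},K^{n},k^{n})$, a subsequence converging in law in the $\mathrm{S}$-topology; jointly with $W$ and the continuous process $V^{n}$ (which is easily shown to be tight in $C$ via the Kolmogorov criterion using the bounded coefficients), I pass along a further subsequence to a joint weak limit. Invoking Jakubowski's almost-sure Skorokhod-type representation for the $\mathrm{S}$-topology, I may assume on a new stochastic basis that $(X^{n},K^{n},k^{n},V^{n},W^{n})\to(\tilde X,\tilde K,\tilde k,\tilde V,\tilde W)$ almost surely, with $\tilde W$ a Brownian motion with respect to the natural filtration.

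Next I would identify $(\tilde X,\tilde K)$ as a weak solution of (\ref{reflected SDE}). Passing to the limit in $V^{n}=x+\int_{t}^{\cdot}b(X^{n}_{r})dr+\int_{t}^{\cdot}\sigma(X^{n}_{r})dW_{r}^{n}$ requires routine, but delicate, arguments: continuity of $b,\sigma$ together with the uniform bound from Lemma \ref{a-priori estimation} handle the drift term by dominated convergence, while the stochastic integral is treated by writing it as a limit of Riemann sums (or by the Jakubowski--Mémin--Pagès stability theorem for stochastic integrals with respect to $\mathrm{S}$-convergent integrands). The identity $\tilde X_{s}+\tilde K_{s}=\tilde V_{s}$ then passes to the limit directly. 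To show $\tilde X_{r}\in\bar G$, I would use that Lemma \ref{a-priori estimation} gives $\mathbb{E}|K^{n}|_{[t,T]}\leq C$, so from $K^{n}_{s}=\int_{t}^{s}n\delta(X^{n}_{r})dr$ and $|\delta(x)|=2\,\mathrm{dist}(x,\bar G)$, Fatou yields $\mathbb{E}\int_{t}^{T}\mathrm{dist}(\tilde X_{r},\bar G)dr=0$, hence $\tilde X_{r}\in\bar G$ a.s. Then I would identify the reflection structure: $\tilde k_{s}:=|\tilde K|_{[t,s]}$ inherits from $k^{n}_{s}=\int_{t}^{s}\langle\nabla\ell(X^{n}_{r}),dK^{n}_{r}\rangle$ the relations $d\tilde K_{s}=\nabla\ell(\tilde X_{s})d\tilde k_{s}$ and $d\tilde k_{s}=\mathbb{1}_{\{\tilde X_{s}\in\partial G\}}d\tilde k_{s}$, the latter coming from the penalization vanishing on $\bar G$ so that any asymptotic pushing can only occur at the boundary (tested against continuous functions that vanish on $\partial G$).

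Having verified that $(\tilde X,\tilde K)$ satisfies (\ref{reflected SDE}), Lemma \ref{Uniq of X} and Remark \ref{Uniq of couple} give uniqueness in law of $(X,K)$ and hence also of $k=|K|_{[t,\cdot]}$. Therefore every subsequential limit coincides in law with $(X,K,k)$, which forces the whole sequence $(X^{n},K^{n},k^{n})$ to converge in law in the $\mathrm{S}$-topology to $(X,K,k)$. To obtain (i) and (ii), I would finally upgrade this to convergence in law on $C([t,T])$. Both $X^{n}$, $K^{n}$, $k^{n}$ are continuous, the limit $(X,K,k)$ has continuous trajectories, and a standard property of the $\mathrm{S}$-topology on $\mathcal{D}([t,T])$ is that if the limit is continuous then $\mathrm{S}$-convergence of the laws together with $C$-tightness of the approximating sequence implies weak convergence in $(C([t,T]),\|\cdot\|_{\infty})$. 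The required $C$-tightness follows from the estimates of Lemma \ref{a-priori estimation} via an Aldous/Kolmogorov-type modulus-of-continuity argument applied separately to $V^{n}$ (from the bounded drift and diffusion) and to the increasing processes $k^{n}$ and $|K^{n}|_{[t,\cdot]}$ (whose increments are controlled uniformly in $n$).

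The main obstacle I expect is the identification step in the second paragraph: passing to the limit in the stochastic integral under only $\mathrm{S}$-convergence (rather than $J_{1}$), and, most delicately, establishing that the total variation measure $d\tilde k$ is carried by $\{\tilde X\in\partial G\}$. The latter cannot be read off directly from the penalized equation since $n\delta(X^{n})$ blows up, and one must instead test against smooth functions supported in the open set $G$ to conclude that $\tilde K$ is constant on the time set $\{\tilde X_{r}\in G\}$; the joint $\mathrm{S}$-convergence of $(X^{n},K^{n})$ plus continuity of $\tilde X$ is what makes this argument work.
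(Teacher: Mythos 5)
Your first two paragraphs essentially redo by hand what the paper obtains by citing \cite[Theorem 4.3]{la-sl/12}: $\mathrm{S}$-tightness, passage to the limit in the stochastic integral under the \textbf{(UT)} condition, identification of the limit as a weak solution of (\ref{reflected SDE}), and whole-sequence $\mathrm{S}$-convergence via the weak uniqueness of Lemma \ref{Uniq of X} and Remark \ref{Uniq of couple}. That part of your plan is sound, if laborious. The genuine gap is in your final step, the upgrade from $\mathrm{S}$- to uniform convergence. You claim the needed $C$-tightness of $(X^{n},K^{n},k^{n})$ follows from Lemma \ref{a-priori estimation} via an Aldous/Kolmogorov modulus-of-continuity argument, with the increments of $k^{n}$ and $\left\vert K^{n}\right\vert _{\left[ t,\cdot \right] }$ ``controlled uniformly in $n$''. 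But Lemma \ref{a-priori estimation} only bounds moments of the \emph{total} variation $\left\vert K^{n}\right\vert _{\left[ t,T\right] }$ over the whole interval; it yields no estimate of the form $\mathbb{E}\left\vert K_{s}^{n}-K_{r}^{n}\right\vert ^{p}\leq C\left\vert s-r\right\vert ^{1+\varepsilon }$ uniformly in $n$, and none is readily available: $dK_{r}^{n}=n\,\delta (X_{r}^{n})\,dr$ carries the factor $n$, and the It\^{o} argument that produces such increment estimates for the reflected pair (used later in the paper for continuity in $(t,x)$) hinges on $\int_{r}^{s}\left\langle z-X_{u},dK_{u}\right\rangle \leq 0$ with $z=X_{r}\in \bar{G}$, which fails for the penalized process because $X_{r}^{n}$ may lie outside $\bar{G}$. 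Uniform total-variation bounds are perfectly compatible with increments concentrating on shrinking time intervals, so no Kolmogorov/Aldous criterion applies; indeed the authors note in the introduction that $(X^{n})_{n}$ may fail to be relatively compact even for $J_{1}$, which is precisely why the $\mathrm{S}$-topology is used in the first place.

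The paper closes this hole by a device your proposal is missing. From $(X^{n},V^{n})\xrightarrow[\mathrm{S}]{\;\;*\;\;}(X,V)$ one gets $V^{n}\xrightarrow[u]{\;\;*\;\;}V$, since $V^{n}$ and $V$ are continuous and $V^{n}$ \emph{is} genuinely $C$-tight (bounded $b,\sigma $ give Kolmogorov-type estimates for $V^{n}$ --- this works for $V^{n}$ but not for $K^{n}$). Then, after a Skorokhod representation making $\hat{V}^{n}\rightarrow \hat{V}$ a.s.\ uniformly, one compares pathwise the penalized equation driven by $\hat{V}^{n}$ with the one driven by the fixed path $\hat{V}$ via the deterministic Tanaka-type inequality $\sup_{s}|\hat{X}_{s}^{n}-\bar{X}_{s}^{n}|^{2}\leq \sup_{s}|\hat{V}_{s}^{n}-\hat{V}_{s}|^{2}+4\sup_{s}|\hat{V}_{s}^{n}-\hat{V}_{s}|\,\big(|\hat{K}^{n}|_{\left[ t,T\right] }+|\bar{K}^{n}|_{\left[ t,T\right] }\big)$, where the total variations are bounded in probability, and then invokes the known deterministic convergence of penalized solutions driven by a fixed continuous path to the solution of the Skorokhod problem (\cite{li-sz/84}, \cite{pa-ra/12}). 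This gives uniform convergence in probability of $(\hat{X}^{n},\hat{K}^{n})$, hence convergence in law of $(X^{n},K^{n})$ in the uniform topology, without ever proving $C$-tightness of $(X^{n},K^{n})$ a priori. For $(ii)$, note also that the paper does not re-run the $\mathrm{S}$-limit machinery for $k^{n}$: it deduces $(ii)$ directly from $(i)$ via the stochastic Helly--Bray result \cite[Proposition 3.4]{za/08} applied to $k^{n}={\int \left\langle \nabla \ell (X_{r}^{n}),dK_{r}^{n}\right\rangle }$. (For $k^{n}$ alone your upgrade could be rescued by a Dini-type argument for monotone processes with continuous limit, but that does not help for $X^{n}$ and $K^{n}$ themselves.)
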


\begin{proof}
\newline
\noindent $\left( i\right) $ First we will prove the convergence:%
\begin{equation}
({X^{n},{K}^{n})}%
\xrightarrow[\mathrm{S}]{\;\;\;*\; \;\;}%
({X,{K})}.  \label{conv in S}
\end{equation}%
We shall apply \cite[Theorem 4.3 $\left( iii\right) $]{la-sl/12}. We recall
that we have the uniqueness of the weak solution. For any $n\in \mathbb{N}$,
$s\in \left[ t,T\right] $, let $H_{s}^{n}:=x\in \bar{G}$ and the processes $%
Z_{s}^{n}:=\left( s,W_{s}\right) $. Our equation can be written as%
\begin{equation*}
{X_{s}^{n}=H_{s}^{n}+\int_{t}^{s}{\left\langle \left( {b,\sigma }\right) {({%
X_{r}^{t,x,n})}},dZ_{s}^{n}\right\rangle -}}K_{s}^{n}\,,\;\forall s\in \left[
t,T\right] .
\end{equation*}%
The processes $Z^{n}$ satisfies the \textbf{(UT)} condition (introduced in
\cite{st/85}), since for any discrete predictable processes $U^{n},\bar{U}%
^{n}$ of the form $U_{s}^{n}:=U_{0}^{n}+\sum\nolimits_{i=0}^{k}U_{i}^{n}$,
respectively $\bar{U}_{s}^{n}:=\bar{U}_{0}^{n}+\sum\nolimits_{i=0}^{k}\bar{U}%
_{i}^{n}$ with $\left\vert U_{i}^{n}\right\vert ,\left\vert \bar{U}%
_{i}^{n}\right\vert \leq 1$,%
\begin{align*}
\mathbb{E}\left\vert \int_{0}^{q}U_{s}^{n}ds+\int_{0}^{q}\bar{U}%
_{s}^{n}dW_{s}\right\vert ^{2}& \leq 2\mathbb{E}\left\vert
\int_{0}^{q}U_{s}^{n}ds\right\vert ^{2}+2\mathbb{E}\left\vert \int_{0}^{q}%
\bar{U}_{s}^{n}dW_{s}\right\vert ^{2} \\
& \leq 2q^{2}+2\mathbb{E}\int_{0}^{q}\left\vert \bar{U}_{s}^{n}\right\vert
^{2}ds\leq 2q\left( q+1\right) .
\end{align*}%
Therefore the assumptions of \cite[Theorem 4.3]{la-sl/12} are satisfied and
thus we obtain that%
\begin{equation*}
{X^{n}%
\xrightarrow[\mathrm{S}]{\;\;\;*\; \;\;}%
X}.
\end{equation*}%
Using once again \cite[Theorem 4.3 $\left( ii\right) $]{la-sl/12} and
definition (\ref{def V^n}) we deduce that%
\begin{equation*}
\left( X_{t_{1}}^{n},X_{t_{2}}^{n},\ldots ,X_{t_{m}}^{n},{V^{n}}\right) {%
\xrightarrow[]{\;\;\;*\; \;\;}%
}\left( X_{t_{1}},X_{t_{2}},\ldots ,X_{t_{m}},{V}\right) ,
\end{equation*}%
for any partition $t=t_{0}<t_{1}<\cdots <t_{m}=T.$ The above convergence is
considered in law, on the space $\left( \mathbb{R}^{d}\right) ^{m}\times
\mathcal{D}([0,T]\,,\,{\mathbb{R}}^{d})$ endowed with the product between
the usual topology on $\left( \mathbb{R}^{d}\right) ^{m}$ and the Skorohod to%
{\footnotesize \-}po{\footnotesize \-}lo{\footnotesize \-}gy $J_{1}$.

Hence%
\begin{equation*}
\left( X^{n},{V^{n}}\right)
\xrightarrow[\mathrm{S}]{\;\;\;*\; \;\;}%
\left( X,{V}\right) ,
\end{equation*}
since $\left( X^{n},V^{n}\right) _{n}$ is tight.

It is known that the space $\mathcal{D}([0,T]\,,\,{\mathbb{R}}^{d})$ of c%
\`{a}dl\`{a}g functions endowed with $\mathrm{S}$-topology is not a linear
topological space, but the sequential continuity of the addition, with
respect to the $\mathrm{S}$-topology, is fulfilled (see Jakubowski \cite[%
Remark 3.12]{ja/97}). Therefore%
\begin{equation*}
K^{n}=V^{n}-X^{n}%
\xrightarrow[\mathrm{S}]{\;\;\;*\; \;\;}%
V-X=K.
\end{equation*}%
$\bigskip $In order to obtain the uniform convergence\footnote[4]{%
We are thankful to professor L. S\l omi\'{n}ski for his useful suggestion in
the proof of this part.} of the sequence $({X^{n},{K}^{n})}_{n}$ we remark
that, since ${V^{n}%
\xrightarrow[J_{1}]{\;\;\;*\; \;\;}%
V}$ and $V^{n}$, $V$ are continuous, this convergence is uniform in
distribution:%
\begin{equation*}
{V^{n}%
\xrightarrow[u]{\;\;\;*\; \;\;}%
V.}
\end{equation*}%
Using the Skorohod theorem, there exists a new probability space $\big(\hat{%
\Omega},\mathcal{\hat{F}},\mathbb{\hat{P}}\big)$ on which we can define
random variables $\hat{V},\hat{V}^{n}$ such that%
\begin{equation*}
\hat{V}\xlongequal{law}V,\;\;\hat{V}^{n}\xlongequal{law}V^{n},\;\forall n\in
\mathbb{N}
\end{equation*}%
and%
\begin{equation*}
{\sup_{s\in \left[ t,T\right] }}|{\hat{V}_{s}^{n}-\hat{V}}_{s}|%
\xrightarrow[\;\;\;\;\;\;\;]{a.s.}{0.}
\end{equation*}%
Let $\hat{X}^{n}$ be the solution of the equation%
\begin{equation*}
\hat{X}_{s}^{n}+{\int_{t}^{s}n\delta }({\hat{X}_{r}^{n})}dr=\hat{V}%
_{s}^{n}\,,\;s\in \left[ t,T\right] ,
\end{equation*}%
$\bar{X}^{n}$ be the solution of%
\begin{equation*}
\bar{X}_{s}^{n}+{\int_{t}^{s}n\delta }({\bar{X}_{r}^{n})}dr=\hat{V}%
_{s}\,,\;s\in \left[ t,T\right] ,
\end{equation*}%
and denote%
\begin{equation*}
\hat{K}_{s}^{n}:={\int_{t}^{s}n\delta }({\hat{X}_{r}^{n})}dr\text{,\ \ }\bar{%
K}_{s}^{n}:={\int_{t}^{s}n\delta }({\bar{X}_{r}^{n})}dr.
\end{equation*}%
It is easy to prove (see, e.g., \cite[Lemma 2.2]{la-sl/03} or \cite[Lemma 2.2%
]{ta/79}) that%
\begin{equation*}
{\big|}\hat{X}_{s}^{n}-\bar{X}_{s}^{n}{\big|}^{2}\leq {\big|}\hat{V}_{s}^{n}-%
\hat{V}_{s}{\big|}^{2}+2\int_{t}^{s}\big\langle(\hat{V}_{s}^{n}-\hat{V}%
_{s})-(\hat{V}_{r}^{n}-\hat{V}_{r}),d(\hat{K}_{r}^{n}-\bar{K}_{r}^{n})%
\big\rangle,
\end{equation*}%
therefore%
\begin{equation}
\sup_{s\in \left[ t,T\right] }{\big|}\hat{X}_{s}^{n}-\bar{X}_{s}^{n}{\big|}%
^{2}\leq \sup_{s\in \left[ t,T\right] }{\big|\hat{V}_{s}^{n}-\hat{V}_{s}\big|%
}^{2}+4\sup_{s\in \left[ t,T\right] }{\big|\hat{V}_{s}^{n}-\hat{V}_{s}\big|}%
\big(|\hat{K}^{n}|_{\left[ t,T\right] }+|\bar{K}^{n}|_{\left[ t,T\right] }%
\big).  \label{differ for X}
\end{equation}%
Since $(\hat{X}^{n},\hat{K}^{n})\xlongequal[]{law}\left( X^{n},K^{n}\right) $
and $|K^{n}|_{\left[ t,T\right] }$ is bounded in probability by inequality (%
\ref{bound for approx}), $|\hat{K}^{n}|_{\left[ t,T\right] }$ is bounded in
probability. Applying \cite[Theorem 2.7]{la-sl/03}, it follows that $|\bar{K}%
^{n}|_{\left[ t,T\right] }$ is also bounded in probability.

But%
\begin{equation*}
\sup_{s\in \left[ t,T\right] }{\big|\hat{V}_{s}^{n}-\hat{V}_{s}\big|%
\xrightarrow[]{prob}0,}
\end{equation*}%
therefore, from (\ref{differ for X}),%
\begin{equation}
\sup_{s\in \left[ t,T\right] }{\big|}\hat{X}_{s}^{n}-\bar{X}_{s}^{n}{\big|}%
^{2}{\xrightarrow[\;\;\;\;\;\;\;\;\;\;]{prob}0.}  \label{conv to 0}
\end{equation}%
On the other hand, let $\hat{X}$ be the solution of the Skorohod problem%
\begin{equation*}
\hat{X}_{s}+\hat{K}_{s}=\hat{V}_{s}\,,\;s\in \left[ t,T\right] .
\end{equation*}%
It can be shown (see the proof of \cite[Theorem 2.1]{li-sz/84} or the proof
of \cite[Theorem 4.17]{pa-ra/12}) that%
\begin{equation*}
\sup_{s\in \left[ t,T\right] }{%
\big|%
}\bar{X}_{s}^{n}-\hat{X}_{s}{%
\big|%
}^{2}\;{\xrightarrow[]{prob}0,}
\end{equation*}%
therefore, from (\ref{conv to 0}),%
\begin{equation*}
\sup_{s\in \left[ t,T\right] }{%
\big|%
}\hat{X}_{s}^{n}-\hat{X}_{s}{%
\big|%
}^{2}\;{\xrightarrow[]{prob}0.}
\end{equation*}%
Since $\hat{K}_{s}=\hat{V}_{s}-\hat{X}_{s}\,$, $\hat{K}_{s}^{n}=\hat{V}%
_{s}^{n}-\hat{X}_{s}^{n}\,,\;s\in \left[ t,T\right] ,$%
\begin{equation*}
(\hat{X}^{n},\hat{K}^{n}){\xrightarrow[u]{prob}(}\hat{X},\hat{K}),
\end{equation*}%
and%
\begin{equation*}
(\hat{X}^{n},\hat{K}^{n})\xlongequal[]{law}\left( X^{n},K^{n}\right) ,
\end{equation*}%
the conclusion follows.

\noindent $\left( ii\right) $ In order to pass to the limit in the integral $%
{\displaystyle{\int_{t}^{s}}\left\langle \nabla \ell ({X_{r})}%
,dK_{r}\right\rangle }$, we apply the stochastic version of Helly-Bray
theorem given by \cite[Proposition 3.4]{za/08}. For the convenience of the
reader we give the statement of that result:

\begin{lemma}
Let $\left( X^{n},K^{n}\right) :\left( \Omega^{n},\mathcal{F}^{n},\mathbb{P}%
^{n}\right) \longrightarrow\mathcal{C}\left( \left[ 0,T\right] ,\mathbb{R}%
^{d}\right) $ be a sequence of random variables and $\left( X,K\right) $
such that%
\begin{equation*}
\left( X^{n},K^{n}\right)
\xrightarrow[u]{\;\;\;*\; \;\;}%
\left( X,K\right) .
\end{equation*}
If $\left( K^{n}\right) _{n}$ has bounded variation a.s. and%
\begin{equation*}
\sup_{n\in\mathbb{N}^{\ast}}\mathbb{P}\left( \left\vert K^{n}\right\vert _{%
\left[ 0,T\right] }>a\right) \longrightarrow0,\;\text{as }%
a\longrightarrow\infty,
\end{equation*}
then $K$ has a.s. bounded variation and
\begin{equation*}
\int_{0}^{T}\left\langle {X_{r}^{n}},dK_{r}^{n}\right\rangle
\xrightarrow[u]{\;\;\;*\; \;\;}%
\int_{0}^{T}\left\langle {X_{r}},dK_{r}\right\rangle ,\text{ as }%
n\longrightarrow\infty.
\end{equation*}
\end{lemma}

Ret{urning to the proof of Lemma \ref{conv in C}, the conclusion $\left(
ii\right) $ follows now easily,} since $k$ and $k^{n}$ are defined by (\ref%
{def k}) and (\ref{def k^n}) respectively.\hfill
\end{proof}

\begin{remark}
Let the assumptions $\mathrm{(A}_{1}\mathrm{-A}_{2}\mathrm{)}$ be satisfied.
Then the weak solution $(X_{s}^{t,x})_{s\in \left[ t,T\right] }$ is a strong
Markov process. Indeed, taking into account the equivalence between the
existence for the (sub-)mar{\footnotesize \-}tingale problem and the
existence of a weak solution for reflected SDE (\ref{reflected SDE}) (see
\cite[Theorem 7]{ka/75}), we obtain that the weak solution $%
(X_{s}^{t,x})_{s\in \left[ t,T\right] }$ is a strong Markov process since
the uniqueness holds (see \cite[Theorem 10]{ka/75}). In our situation, this
equivalence can be obtained by using Krylov's inequality for reflecting
diffusions.
$\medskip $
\end{remark}

The following result will finalize the proof of Theorem \ref{Main result 2}.

We extend the solution process to $\left[ 0,T\right] $ by denoting%
\begin{equation}
X_{s}^{t,x}:=x,\;K_{s}^{t,x}:=0,\;\forall s\in\lbrack0,t).
\label{extens X,K}
\end{equation}

\begin{lemma}
We suppose that the assumptions $\mathrm{(A}_{1}\mathrm{-A}_{2}\mathrm{)}$
are satisfied and let $(X_{s}^{t,x},K_{s}^{t,x})_{s\in\left[ t,T\right] }$
be the weak solution of (\ref{reflected SDE}). Then

\noindent$\left( i\right) $ the family $(X_{s}^{t,x},K_{s}^{t,x})_{s\in\left[
0,T\right] }$ is tight with respect to the initial data $\left( t,x\right) $%
, as family of $\mathcal{C}(\left[ 0,T\right] $,$\mathbb{R}^{d}\times\mathbb{%
R}^{d})$-valued random variables and

\noindent$\left( ii\right) $ the weak solution $(X_{s}^{t,x},K_{s}^{t,x})_{s%
\in\left[ t,T\right] }$ is continuous in law with respect to the initial
data $\left( t,x\right) $.
\end{lemma}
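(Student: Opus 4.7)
The plan is to prove (i) first, by establishing uniform tightness estimates that carry over from the penalized approximations, and then to deduce (ii) from (i) via a standard subsequence argument combined with the weak uniqueness of Lemma \ref{Uniq of X} and Remark \ref{Uniq of couple}.

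For (i), by the classical tightness criterion in $\mathcal{C}([0,T],\mathbb{R}^d\times\mathbb{R}^d)$, it suffices to control the initial values (immediate, since $X^{t,x}_0=x\in\bar G$ is bounded and $K^{t,x}_0=0$) together with a modulus-of-continuity estimate uniform in $(t,x)$. I would derive the latter at the level of the penalized approximations and then transfer to the limit. Using Burkholder--Davis--Gundy and the boundedness of $b,\sigma$, the driver satisfies $\mathbb{E}|V^{t,x,n}_s-V^{t,x,n}_r|^{2q}\leq C|s-r|^q$ uniformly in $n,t,x$. Combining this with the Lipschitz property of the Skorohod reflection on the convex set $\bar G$ transfers Hölder-type increment bounds to $X^{t,x,n}$ and $K^{t,x,n}$, uniformly in $n,t,x$. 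Weak lower semicontinuity (Fatou's lemma), together with the convergence $(X^{t,x,n},K^{t,x,n})\xrightarrow[u]{*}(X^{t,x},K^{t,x})$ from Lemma \ref{conv in C}, then yields the same estimate for the limit, uniformly in $(t,x)$. On $[0,t)$ the processes are constant by (\ref{extens X,K}), so no further estimate is needed, and Kolmogorov--Chentsov concludes.

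For (ii), let $(t_n,x_n)\to(t,x)$ and write $Y^n:=(X^{t_n,x_n},K^{t_n,x_n})$. By (i), $\{Y^n\}$ is tight; along any subsequence I would extract a further subsequence converging in law to some $\tilde Y=(\tilde X,\tilde K)$. Invoking the Skorohod representation theorem, one may assume this convergence is a.s. uniform on a common probability space, and the key step is to verify that $\tilde Y$ is a weak solution of (\ref{reflected SDE}) with initial data $(t,x)$. Passing to the limit in the drift and diffusion parts of the SDE is routine, using the continuity of $b,\sigma$ and the uniform a.s. convergence. The main obstacle is the passage to the limit in the reflection terms: one must establish both the representation $\tilde K_s=\int_t^s\nabla\ell(\tilde X_r)d|\tilde K|_{[t,r]}$ and the support condition $d|\tilde K|_{[t,\cdot]}=\mathbb{1}_{\{\tilde X\in\partial G\}}d|\tilde K|_{[t,\cdot]}$. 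For this I plan to apply the stochastic Helly--Bray theorem used in the proof of Lemma \ref{conv in C} to pass to the limit in the Stieltjes integrals, and to exploit the continuity of $\ell$ together with the uniform convergence of $X^{t_n,x_n}$ in order to transfer the support condition to the limit. Once $\tilde Y$ is identified as a weak solution of (\ref{reflected SDE}), Lemma \ref{Uniq of X} and Remark \ref{Uniq of couple} give $\tilde Y\stackrel{\mathrm{law}}{=}(X^{t,x},K^{t,x})$; since every subsequential limit agrees, the full sequence $Y^n$ converges in law, proving (ii).
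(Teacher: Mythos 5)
Your part (i) contains a genuine gap. You propose to derive the uniform increment bounds for $X^{t,x,n}$, $K^{t,x,n}$ by combining the estimate $\mathbb{E}|V_{s}^{t,x,n}-V_{r}^{t,x,n}|^{2q}\leq C|s-r|^{q}$ with ``the Lipschitz property of the Skorohod reflection on the convex set $\bar{G}$''. But the penalized processes do not solve a Skorohod problem: $X^{t,x,n}+K^{t,x,n}=V^{t,x,n}$ with $K_{s}^{t,x,n}=\int_{t}^{s}n\delta(X_{r}^{t,x,n})dr$ is only an approximation of the reflection, and $X^{t,x,n}$ leaves $\bar{G}$, so no regularity property of the Skorohod map applies to it. Moreover, even for the genuine Skorohod problem on a smooth bounded convex domain, the estimate actually available (and the one the paper itself uses, see (\ref{differ for X})) is the Tanaka-type inequality
\begin{equation*}
\sup_{s}{\big|}\hat{X}_{s}^{n}-\bar{X}_{s}^{n}{\big|}^{2}\leq \sup_{s}{\big|}\hat{V}_{s}^{n}-\hat{V}_{s}{\big|}^{2}+4\sup_{s}{\big|}\hat{V}_{s}^{n}-\hat{V}_{s}{\big|}\big(|\hat{K}^{n}|_{\left[ t,T\right] }+|\bar{K}^{n}|_{\left[ t,T\right] }\big),
\end{equation*}
which is H\"older-$1/2$ modulo total-variation factors, not Lipschitz; so the transfer of H\"older increment bounds ``uniformly in $n,t,x$'' is unjustified as stated. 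The whole detour through the penalized equations is in fact unnecessary: the paper proves the estimates \emph{directly on the limit} $(X,K)$ by applying It\^{o}'s formula to $|X_{s}-X_{r}|^{2}$ and using the convexity inequality ${\int_{r}^{s}}\left\langle z-X_{u},dK_{u}\right\rangle ={\int_{r}^{s}}\left\langle z-X_{u},\nabla \ell \left( X_{u}\right) \right\rangle dk_{u}\leq 0$ for $z\in \bar{G}$ (take $z=X_{r}$, legitimate here because $X_{r}\in \bar{G}$, unlike in the penalized case), obtaining $\mathbb{E}|X_{s}-X_{r}|^{8}\leq C\max (|s-r|^{4},|s-r|^{2})$ and the analogous bound for $K$ via $K_{s}-K_{r}={\int_{r}^{s}}b\,du+{\int_{r}^{s}}\sigma \,dW-(X_{s}-X_{r})$, with constants independent of $(t,x)$. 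You should replace your transfer argument by this direct computation.

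For part (ii) your architecture (tightness, subsequence, Skorohod representation, identification of the limit as a weak solution, uniqueness in law from Lemma \ref{Uniq of X} and Remark \ref{Uniq of couple}, subsequence principle) is exactly the paper's, but you dismiss as ``routine'' the one step that is not: passing to the limit in the stochastic integral. Almost sure uniform convergence of $\hat{X}^{n}$ alone does not let you pass to the limit in $\int_{t}^{s}\sigma (\hat{X}_{r}^{n})d\hat{W}_{r}^{n}$; you must include the Brownian motions in the tight tuple, i.e.\ work with $(X^{n},K^{n},W^{n})$ in $\mathcal{C}([0,T],\mathbb{R}^{d}\times \mathbb{R}^{d}\times \mathbb{R}^{d^{\prime }})$, and then verify (as the paper does via \cite[Proposition 2.15]{pa-ra/12}) that $\hat{W}^{n}$ is a Brownian motion with respect to the joint filtration $\mathcal{F}^{\hat{W}^{n},\hat{X}^{n}}$ and likewise for the limit, before concluding $\mathbb{E}\sup_{s}|\hat{V}_{s}^{n}-\hat{V}_{s}|^{q}\rightarrow 0$. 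Your extra care about verifying the reflection conditions for the limit via the Helly--Bray result \cite[Proposition 3.4]{za/08} is sound (the paper is in fact terser there), but it does not compensate for the missing joint treatment of the driving noise.
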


\begin{proof}
$\left( i\right) $ First let $\left( t,x\right) \in\left[ 0,T\right] \times%
\bar{G}$ be fixed. Denote as before $\left( X_{s},K_{s}\right)
:=(X_{s}^{t,x},K_{s}^{t,x})$.

Applying It\^{o}'s formula for the process $X_{s}-X_{r}$, where $r$ is fixed
and $s\geq r$, we deduce%
\begin{align*}
\left\vert X_{s}-X_{r}\right\vert ^{2}& =2{\displaystyle\int_{r}^{s}}%
\left\langle X_{u}-X_{r},b\left( X_{u}\right) \right\rangle du-2{%
\displaystyle\int_{r}^{s}}\left\langle X_{u}-X_{r},dK_{u}\right\rangle +{%
\displaystyle\int_{r}^{s}}\left\vert \sigma \left( X_{u}\right) \right\vert
^{2}du \\
& +2{\displaystyle\int_{r}^{s}}\left\langle X_{u}-X_{r},\sigma \left(
X_{u}\right) dW_{u}\right\rangle \\
& \leq 2{\displaystyle\int_{r}^{s}}\left\langle X_{u}-X_{r},b\left(
X_{u}\right) \right\rangle du+{\displaystyle\int_{r}^{s}}\left\vert \sigma
\left( X_{u}\right) \right\vert ^{2}du+2{\displaystyle\int_{r}^{s}}%
\left\langle X_{u}-X_{r},\sigma \left( X_{u}\right) dW_{u}\right\rangle ,
\end{align*}%
since $X_{u},X_{r}\in \bar{G}$ and%
\begin{equation*}
{\int_{r}^{s}}\left\langle z-X_{u},dK_{u}\right\rangle ={\int_{r}^{s}}%
\left\langle z-X_{u},\nabla \ell \left( X_{u}\right) \right\rangle
dk_{u}\leq 0,\;\forall \,0\leq r\leq s,\;\forall \,z\in \mathbb{R}^{d}.
\end{equation*}%
Therefore, using that $b,\sigma $ are bounded functions and $\bar{G}$ is a
bounded domain,%
\begin{equation}
\begin{array}{l}
\mathbb{E}{\Big(}\left\vert X_{s}-X_{r}\right\vert ^{8}{\Big)}\leq
C\left\vert s-r\right\vert ^{4}+C\mathbb{E}{\Big(}\sup_{v\in \left[ r,s%
\right] }{\displaystyle\int_{r}^{v}}\left\langle X_{u}-X_{r},\sigma \left(
X_{u}\right) dW_{u}\right\rangle {\Big)}^{4}\medskip \\
\quad \quad \quad \quad \quad \quad \quad \;\leq C\left\vert s-r\right\vert
^{4}+C\mathbb{E}{\Big(\displaystyle\int_{r}^{s}}\left\vert
X_{u}-X_{r}\right\vert ^{2}\left\vert \sigma \left( X_{u}\right) \right\vert
^{2}du{\Big)}^{2}\medskip \\
\quad \quad \quad \quad \quad \quad \quad \;\leq C\left\vert s-r\right\vert
^{4}+C\left\vert s-r\right\vert ^{2}\leq C\max \left( \left\vert
s-r\right\vert ^{4},\left\vert s-r\right\vert ^{2}\right) .%
\end{array}
\label{tight crit 1}
\end{equation}%
Concerning $K$, we remark first that%
\begin{equation*}
K_{s}-K_{r}={\int_{r}^{s}}b\left( X_{u}\right) du+{\int_{r}^{s}}\sigma
\left( X_{u}\right) dW_{u}-\left( X_{s}-X_{r}\right) .
\end{equation*}%
Hence%
\begin{equation}
\begin{array}{l}
\mathbb{E}{\Big(}\left\vert K_{s}-K_{r}\right\vert ^{8}{\Big)}\leq C\mathbb{E%
}{\Big(}\left\vert X_{s}-X_{r}\right\vert ^{8}{\Big)}+C\mathbb{E}{\Big(%
\displaystyle\int_{r}^{s}}b\left( X_{u}\right) du{\Big)}^{8}\medskip \\
\quad \quad \quad \quad \quad \quad \quad \quad \quad +C\mathbb{E}{\Big(}%
\sup_{v\in \left[ r,s\right] }{\displaystyle\int_{r}^{v}}\sigma \left(
X_{u}\right) dW_{u}{\Big)}^{8}\medskip \\
\quad \quad \quad \quad \quad \quad \quad \;\leq C\max \left( \left\vert
s-r\right\vert ^{4},\left\vert s-r\right\vert ^{2}\right) +C\left\vert
s-r\right\vert ^{8}+C\mathbb{E}{\Big(\displaystyle\int_{r}^{s}}\left\vert
\sigma \left( X_{u}\right) \right\vert ^{2}du{\Big)}^{4}\medskip \\
\quad \quad \quad \quad \quad \quad \quad \;\leq C\max \left( \left\vert
s-r\right\vert ^{8},\left\vert s-r\right\vert ^{2}\right) .%
\end{array}
\label{tight crit 2}
\end{equation}%
{Observe} that the constants in the right hand of the inequalities (\ref%
{tight crit 1}) and (\ref{tight crit 2}) do not depend on $\left( t,x\right)
$.

Therefore, applying a tightness criterion (see, e.g. \cite[Cap. I]{pa-ra/12}%
) the desired conclusion follows.$\bigskip $

$\left( ii\right) $ Taking into account the conclusion $\left( i\right) $
and the Prokhorov theorem, we have that if $\left( t_{n},x_{n}\right)
\rightarrow\left( t,x\right) $, as $n\rightarrow\mathbb{\infty}$, then there
exists a subsequence, still denoted by $\left( t_{n},x_{n}\right) $, such
that%
\begin{equation*}
X^{n}:=X^{t_{n},x_{n}}%
\xrightarrow[u]{\;\;\;*\; \;\;}%
X,\quad K^{n}:=K^{t_{n},x_{n}}%
\xrightarrow[u]{\;\;\;*\; \;\;}%
K,\;\text{as }n\rightarrow\mathbb{\infty}.
\end{equation*}
It remains to identify the limits, i.e. $X%
\xlongequal[]{law}%
X^{t,x}$ and $K%
\xlongequal[]{law}%
K^{t,x}$.

Since $\left( X^{n},K^{n},W^{n}\right) _{n}$ is a $\mathcal{C}(\left[ 0,T%
\right] ,\mathbb{R}^{d}\times \mathbb{R}^{d}\times \mathbb{R}^{d^{\prime }})$
tight sequence, by the Skorohod Theorem, we can choose a probability space $%
\big(\hat{\Omega},\mathcal{\hat{F}},\mathbb{\hat{P}}\big)$ (which can be
taken in fact as $\left( \left[ 0,1\right] ,\mathcal{B}_{\left[ 0,1\right]
},\mu \right) $ where $\mu $ is the Lebesgue measure), and $(\hat{X}^{n},%
\hat{K}^{n},\hat{W}^{n})$, $(\hat{X},\hat{K},\hat{W})$ defined on this
probability space, such that
\begin{equation*}
(\hat{X}^{n},\hat{K}^{n},\hat{W}^{n})%
\xlongequal[]{law}%
(X^{n},K^{n},W^{n}),\;\;(\hat{X},\hat{K},\hat{W})%
\xlongequal[]{law}%
(X,K,W)
\end{equation*}%
and%
\begin{equation*}
(\hat{X}^{n},\hat{K}^{n},\hat{W}^{n})\xrightarrow[]{a.s.}(\hat{X},\hat{K},%
\hat{W}),\;\text{as }n\rightarrow \mathbb{\infty }.
\end{equation*}%
Then, using \cite[Proposition 2.15]{pa-ra/12}, we deduce that $\hat{W}^{n}$
is an $\mathcal{F}_{t}^{\hat{W}^{n},\hat{X}^{n}}$-Brownian motion, $\hat{W}$
is an $\mathcal{F}_{t}^{\hat{W},\hat{X}}$-Brownian motion and (together with
the Lebesgue theorem) we infer that, for all $q\geq 1$,%
\begin{equation*}
\mathbb{E}{\big(}\sup_{s\in \left[ t,T\right] }{\big|}\hat{V}_{s}^{n}-\hat{V}%
_{s}{\big|}^{q}{\big)}\rightarrow 0,\;\text{as }n\rightarrow \mathbb{\infty }%
,
\end{equation*}%
where%
\begin{align*}
\hat{V}_{s}^{n}& :=x+\int_{t}^{s}b(\hat{X}_{r}^{n})dr+\int_{t}^{s}\sigma (%
\hat{X}_{r}^{n})d\hat{W}_{r}^{n}\text{\ \ and} \\
\hat{V}_{s}& :=x+\int_{t}^{s}b(\hat{X}_{r})dr+\int_{t}^{s}\sigma (\hat{X}%
_{r})d\hat{W}_{r}\,,\;s\in \left[ t,T\right] .
\end{align*}%
If $V^{n}$ is defined by%
\begin{equation*}
V_{s}^{n}:=x+\int_{t}^{s}b(X_{r}^{n})dr+\int_{t}^{s}\sigma
(X_{r}^{n})dW_{r}^{n}
\end{equation*}%
then $X_{s}^{n}+K_{s}^{n}=V_{s}^{n},\;\mathbb{P}$-a.s.; using \cite[%
Corollary 2.14]{pa-ra/12}, we see that%
\begin{equation*}
\left( X^{n},K^{n},W^{n},V^{n}\right)
\xlongequal[]{law}%
(\hat{X}^{n},\hat{K}^{n},\hat{W}^{n},\hat{V}^{n})\;\text{on }\mathcal{C}(%
\left[ 0,T\right] ,\mathbb{R}^{d}\times \mathbb{R}^{d}\times \mathbb{R}%
^{d^{\prime }}\times \mathbb{R}^{d})
\end{equation*}%
and%
\begin{equation*}
\hat{X}_{s}^{n}+\hat{K}_{s}^{n}=\hat{V}_{s}^{n},\;\;\text{a.s.}
\end{equation*}%
which yields, passing to the limit, that%
\begin{equation*}
\hat{X}_{s}+\hat{K}_{s}=\hat{V}_{s},\;\text{a.s.}
\end{equation*}%
Then the coupled process $(\hat{X}_{s},\hat{K}_{s})$ is a solution of (\ref%
{reflected SDE}) corresponding to the initial data $\left( t,x\right) $.
Taking into account the uniqueness in law of the solution $%
(X_{s}^{t,x},K_{s}^{t,x})_{s\in \left[ t,T\right] }\,$(see Remark \ref{Uniq
of couple}) we deduce that the whole sequence $(X_{s}^{n},K_{s}^{n})_{s\in %
\left[ t,T\right] }$ converges to the process $(X_{s}^{t,x},K_{s}^{t,x})_{s%
\in \left[ t,T\right] }$, and therefore the continuity with respect to $%
\left( t,x\right) $ follows.\hfill
\end{proof}

\section{BSDEs and nonlinear Neumann boundary problem}

Let us now consider the processes $(X_{s}^{t,x,n},k_{s}^{t,x,n})_{t\leq
s\leq T}$ and $(X_{s}^{t,x},k_{s}^{t,x})_{t\leq s\leq T}$ given by relations
(\ref{reflected SDE}) - (\ref{def k^n}), for $\left( t,x\right) \in\left[ 0,T%
\right] \times\bar{G}$.

In order to give the proof of Theorem \ref{Main result 1} we associate the
following generalized backward stochastic differential equations (BSDE for
short) on $\left[ t,T\right] $:%
\begin{equation}
{Y_{s}^{t,x}=g(X}_{T}^{t,x}){+\int_{s}^{T}f(r,{X_{r}^{t,x},Y{_{r}^{t,x}})}%
dr-\int_{s}^{T}U{_{r}^{t,x}}dM_{r}^{X^{t,x}}-\int_{s}^{T}h(r,{X_{r}^{t,x},Y{%
_{r}^{t,x}})dk_{r}^{t,x}}}\,,  \label{generalized BSDE}
\end{equation}
and respectively the BSDE corresponding to the solution of (\ref{penalized
reflected SDE})%
\begin{equation}
\begin{array}{r}
{Y_{s}^{t,x,n}=g(X}_{T}^{t,x,n}){+\displaystyle\int_{s}^{T}f(r,{%
X_{r}^{t,x,n},Y{_{r}^{t,x,n}})}dr-\displaystyle\int_{s}^{T}U{_{r}^{t,x,n}}%
dM_{r}^{X^{t,x,n}}}\medskip \\
{-\displaystyle\int_{s}^{T}h(r,{X_{r}^{t,x,n},Y{_{r}^{t,x,n}})dk_{r}^{t,x,n}}%
}\,,%
\end{array}
\label{penalized generalized BSDE}
\end{equation}
where%
\begin{equation}
{M_{s}^{X^{t,x}}:=\int_{t}^{s}\sigma({X_{r}^{t,x})}dW_{r}}\,,\text{\quad }{%
M_{s}^{X^{t,x,n}}:=\int_{t}^{s}\sigma({X_{r}^{t,x,n})}dW_{r}}
\label{def M, M^n}
\end{equation}
are the martingale part of the reflected diffusion process $X^{t,x}$ and $%
X^{t,x,n}$ respectively. We assume for simplicity that the processes $({{%
X_{s}^{t,x,n},K_{s}^{t,x,n})}}_{s\in\left[ t,T\right] }$ and $({{%
X_{s}^{t,x},K_{s}^{t,x})}}_{s\in\left[ t,T\right] }$ are considered on the
canonical space.

We recall that the coefficients $f,g$ and $h$ satisfy assumption \textrm{(A}$%
_{3}$\textrm{)}. Then, given the processes $({{X_{s}^{t,x,n},k_{s}^{t,x,n})}}%
_{s\in\left[ t,T\right] }$ and $({{X_{s}^{t,x},k_{s}^{t,x})}}_{s\in\left[ t,T%
\right] }$, this assumption ensures (see \cite{pa-zh/98}) the existence and
the uniqueness for the couples $({{Y_{s}^{t,x,n},U_{s}^{t,x,n})}}_{s\in\left[
t,T\right] }$ and $({{Y_{s}^{t,x},U_{s}^{t,x})}}_{s\in\left[ t,T\right] }$
respectively. Arguing as in \cite{bo-ca/04}, one can establish the following
result.

\begin{proposition}
\label{result_Boufoussi}Let the assumptions $\mathrm{(A}_{1}-\mathrm{A}_{3}%
\mathrm{)}$ be satisfied. Let $(Y_{s}^{t,x,n},U_{s}^{t,x,n})_{s\in\left[ t,T\right] }$ and $%
(Y_{s}^{t,x},U_{s}^{t,x})_{s\in\left[ t,T\right] }$ be the solutions of the
BSDEs (\ref{penalized generalized BSDE}) and (\ref{generalized BSDE}),
respectively. Then%
\begin{equation*}
\left( Y^{t,x,n},M^{t,x,n},H^{t,x,n}\right) {\xrightarrow[\mathrm{S}\mathbb{%
\times}\mathrm{S}\mathbb{\times}\mathrm{S}]{\;\;\;\;\;\;\;\;*\;\;\;\;\;\;\;}}%
\left( Y^{t,x},M^{t,x},H^{t,x}\right) ,
\end{equation*}
where%
\begin{equation*}
M_{s}^{t,x,n}:={\int_{t}^{s}U{_{r}^{t,x,n}}dM_{r}^{X^{t,x,n}}{\,,\;\;}%
H_{s}^{t,x,n}:={\int_{0}^{s}h(r,{X_{r}^{t,x,n},Y{_{r}^{t,x,n}})dk_{r}^{t,x,n}%
}},}
\end{equation*}%
\begin{equation}
{M_{s}^{t,x}:=\int_{t}^{s}U{_{r}^{t,x}}dM_{r}^{X^{t,x}}{\,,\;\;}H_{s}^{t,x}:=%
{\int_{0}^{s}}h(r,{X_{r}^{t,x},Y{_{r}^{t,x}})dk_{r}^{t,x}}}
\label{def M and H}
\end{equation}
and ${M^{X^{t,x,n}}}$ and ${M^{X^{t,x}}}$ are defined by (\ref{def M, M^n}).$%
\medskip$

Moreover, we have that $\lim\limits_{n\rightarrow\mathbb{\infty}%
}Y_{t}^{t,x,n}=Y_{t}^{t,x}\,.$
\end{proposition}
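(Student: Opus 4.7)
My plan is to follow the strategy of \cite{bo-ca/04}, combining the uniform-in-law convergence of the forward processes established in Section 3 with standard tightness and stability arguments for generalized BSDEs, adapted to handle both the Stieltjes integral driven by $k^{t,x,n}$ and the fact that the driving martingales $M^{X^{t,x,n}}$ vary with $n$.

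First, I would derive uniform a priori estimates on $(Y^{t,x,n}, U^{t,x,n})$. Applying It\^{o}'s formula to $e^{\lambda s}|Y_s^{t,x,n}|^2$ for $\lambda$ large enough, and using the monotonicity condition (A$_3$)(i) on $f$, the Lipschitz condition (A$_3$)(ii) on $h$, the linear growth (A$_3$)(iii), together with the bound $\mathbb{E}(|k^{t,x,n}|_{[t,T]}^{2q})\le C$ (which follows from $|\nabla\ell|$ being bounded and Lemma \ref{a-priori estimation}), one obtains
\[
\mathbb{E}\!\left(\sup_{s\in[t,T]}|Y_s^{t,x,n}|^2\right) + \mathbb{E}\!\int_t^T |U_r^{t,x,n}|^2\,d\langle M^{X^{t,x,n}}\rangle_r \le C,
\]
uniformly in $n$, and consequently uniform $L^2$-bounds on $\sup_s|M_s^{t,x,n}|$ and on $H_T^{t,x,n}$.

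Next, I would establish S-tightness of $(Y^{t,x,n}, M^{t,x,n}, H^{t,x,n})$ via the criterion recalled from \cite{le/02}. The martingale $M^n$ has zero conditional variation and uniformly bounded $L^2$-norm; the increasing process $H^n$ satisfies $\mathrm{CV}_T(H^n)\le 2\,\mathbb{E}(H_T^n)$, controlled by the previous estimates and the linear growth of $h$; for $Y^n$, rewriting
\[
Y_s^n = g(X_T^n) + \int_s^T f(r, X_r^n, Y_r^n)\,dr + (H_T^n - H_s^n) - (M_T^n - M_s^n)
\]
one controls $\mathrm{CV}_T(Y^n)$ term by term. Along a subsequence, the triple S-converges in law to some $(\tilde Y, \tilde M, \tilde H)$, while by Lemma \ref{conv in C} the forward data $(X^n, K^n, k^n)$ converge to $(X, K, k)$ uniformly in law. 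Combining Skorohod representation with Jakubowski's almost-sure realization for S-convergent sequences, one may pass to the limit $\mathbb{P}$-a.s. in the BSDE. The Stieltjes term is handled by the stochastic Helly--Bray result \cite[Proposition 3.4]{za/08}, which gives $\int h(r, X_r^n, Y_r^n)\,dk_r^n \to \int h(r, X_r, \tilde Y_r)\,dk_r$; the stochastic integral $\int U^n\,dM^{X^n}$ is treated using (UT) bounds on the driving semimartingales (as already used in Lemma \ref{conv in C}) so that its limit $\tilde M$ is a martingale in the limit filtration, which by the martingale representation theorem admits the form $\int \tilde U\,dM^X$ for some predictable $\tilde U$. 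The limit $(\tilde Y, \tilde U)$ then solves (\ref{generalized BSDE}); by uniqueness of this BSDE under (A$_3$) (see \cite{pa-zh/98}), $(\tilde Y, \tilde M, \tilde H)$ coincides in law with $(Y^{t,x}, M^{t,x}, H^{t,x})$ and the full sequence converges.

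The last claim $\lim_n Y_t^{t,x,n}=Y_t^{t,x}$ is then immediate: the value $Y_t^{t,x,n}$ is $\mathcal{F}_t$-measurable but $\mathcal{F}_t$ is generated by the Brownian motion starting at time $t$, hence trivial, so each $Y_t^{t,x,n}$ (and $Y_t^{t,x}$) is deterministic, and convergence in law at $s=t$ is convergence of numbers. The main obstacle in this plan is the identification of the limit of the stochastic integrals $M^n = \int U^n\,dM^{X^n}$ in a setting where both the integrand and the integrator depend on $n$ and live on different filtrations; rigorously executing this step requires either the Skorohod embedding into a common probability space followed by a martingale-problem argument, or the weak-convergence-of-semimartingales machinery based on (UT) employed earlier in the paper.
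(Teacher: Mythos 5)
Your overall architecture --- uniform a priori estimates via It\^{o}'s formula, $\mathrm{CV}_T$-based S-tightness from \cite{le/02}, subsequence extraction, Skorohod/Jakubowski almost-sure realization, the stochastic Helly--Bray result \cite[Proposition 3.4]{za/08} for the $dk^{t,x,n}$ term, and identification of the limit through uniqueness of the generalized BSDE --- is exactly the route the paper intends: for this proposition the paper itself only writes ``arguing as in \cite{bo-ca/04}'', and it carries out precisely this machinery in the proof of Proposition \ref{cont of Y}. Two steps, however, are gaps as written. The more serious one is your identification of the limit of the stochastic integrals: you assert that the limit martingale $\tilde{M}$ ``admits the form $\int \tilde{U}\,dM^{X}$ by the martingale representation theorem''. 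The relevant filtration in the limit is the enlarged filtration ${\mathcal{F}}^{X,\tilde{Y},\tilde{M}}$, and the predictable representation property of $M^{X}$ with respect to this filtration is not available a priori; invoking it is unjustified. The paper's device avoids constructing $\tilde{U}$ altogether: using the martingale-problem test (functionals $\psi_{s_1}$ and $\varphi\in C^{\infty}$, passing to the limit with \cite[Proposition 3.4]{za/08} for the $dK$ term) it shows that $M^{X^{t,x}}$, and hence $M^{t,x}=\int U^{t,x}\,dM^{X^{t,x}}$, is a martingale with respect to the \emph{same} enlarged filtration as $\tilde{M}$; it then applies It\^{o}'s formula to $|Y_s^{t,x}-\tilde{Y}_s|^2$ and a generalized Gronwall lemma \cite[Lemma 12]{ma-ra/07} to conclude $Y^{t,x}=\tilde{Y}$ and $M^{t,x}=\tilde{M}$ directly, comparing with the solution of (\ref{generalized BSDE}) already known to exist by \cite{pa-zh/98}. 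You gesture at this repair in your closing caveat (``martingale-problem argument''), but the representation claim itself should be removed, since the monotonicity of $f$ and Lipschitz property of $h$ make the direct comparison work without ever producing $\tilde{U}$.

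The second gap is the final assertion that $\lim_n Y_t^{t,x,n}=Y_t^{t,x}$ is ``immediate''. S-convergence in law yields convergence of finite-dimensional marginals only outside a countable exceptional set of time points, which may well contain the fixed initial time $t$; the determinism of $Y_t^{t,x,n}$ (which you correctly note) reduces convergence in law to convergence of numbers but does not remove this obstruction. The standard repair, which is how the paper argues in the proof of Theorem \ref{Main result 1}, is to evaluate the BSDE at $s=t$ and pass to the limit term by term: $g(X_T^{t,x,n})$ and the $dr$-integral converge by the uniform convergence in law of the forward data (Lemma \ref{conv in C}), the Stieltjes term at the terminal point converges by \cite[Lemma 3.3]{bo-ca/04} applied at $T$, and $M_T^{t,x,n}\rightarrow M_T^{t,x}$ in law by \cite[Remark 2.4]{ja/97}, evaluation at the right endpoint being compatible with the S-topology. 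With these two repairs your plan matches the paper's argument.
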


\begin{remark}
\label{Uniq of Y}The solution process $(Y_{s}^{t,x})_{s\in \left[ t,T\right]
}$ is unique in law. Indeed, following \cite[Theorem 3.4]{ka/97}, it can be
proven that, since the coefficients $b$ and $\sigma $ satisfy the
assumptions $\mathrm{(A}_{1}\mathrm{-A}_{2}\mathrm{)}$ and the solution
process has the Markov property, there exists a deterministic measurable
function $u$ such that the solution $Y_{s}^{t,x}=u(s,X_{s}^{t,x})$, $s\in %
\left[ t,T\right] $ $d\mathbb{P}\otimes ds$ a.s. The conclusion follows by
Proposition \ref{Uniq of X}{\ and the uniqueness (as a strong solution) of $%
Y $}.
\end{remark}

In the following, we extend $X^{t,x},K^{t,x}$ to $\left[ 0,T\right] $ as in (%
\ref{extens X,K}) and $(Y^{t,x},U^{t,x})$ by denoting%
\begin{equation*}
Y_{s}^{t,x}:=Y_{t}^{t,x},\;U_{s}^{t,x}:=0\text{\quad and\quad}{%
M_{s}^{X^{t,x}}:=0}\text{, }\forall s\in\lbrack0,t).
\end{equation*}

\begin{proposition}
\label{cont of Y}Let $\left( t_{n},x_{n}\right) \rightarrow\left( t,x\right)
$, as $n\rightarrow\mathbb{\infty}$. Then there exists a subsequence $%
(t_{n_{k}},x_{n_{k}})_{k\in\mathbb{N}}$ such that $Y^{t_{n_{k}},x_{n_{k}}}%
\xrightarrow[\mathrm{S}]{\;\;\;*\; \;\;}%
Y^{t,x}$.
\end{proposition}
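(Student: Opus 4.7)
The plan is two-fold: first, establish $\mathrm{S}$-tightness of the sequence $(Y^{t_n,x_n})_n$; second, identify any subsequential $\mathrm{S}$-limit with $Y^{t,x}$ using the uniqueness in law from Remark \ref{Uniq of Y}.

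For the tightness step, I would derive a priori bounds on $(Y^{t,x},M^{t,x},H^{t,x})$ that are uniform in $(t,x)\in[0,T]\times\bar G$. Applying It\^o's formula to $e^{\lambda s}|Y_s^{t,x}|^2$ for a sufficiently large $\lambda$, and invoking the monotonicity $\mathrm{(A}_3\mathrm{)}(i)$ of $f$ in $y$, the Lipschitz property $\mathrm{(A}_3\mathrm{)}(ii)$ of $h$, the polynomial growth of $g$, together with the moment estimates on $X^{t,x}$ and $|K^{t,x}|_{[t,T]}$ transferred from Lemma \ref{a-priori estimation} via Lemma \ref{conv in C}, should yield a constant $C>0$ independent of $(t,x)$ such that
\begin{equation*}
\mathbb{E}\sup_{s\in[t,T]}|Y_s^{t,x}|^2 + \mathbb{E}\langle M^{t,x}\rangle_T + \mathbb{E}|H^{t,x}|_T^2 \le C.
\end{equation*}
The $\mathrm{S}$-tightness criterion from \cite[Appendix A]{le/02} then requires only a uniform bound on the conditional variation, which here is controlled by
\begin{equation*}
\mathrm{CV}_T(Y^{t_n,x_n}) \le \mathbb{E}\!\int_{t_n}^T\!\!\big|f(r,X^n_r,Y^n_r)\big|\,dr + \mathbb{E}\!\int_{t_n}^T\!\!\big|h(r,X^n_r,Y^n_r)\big|\,dk_r^n;
\end{equation*}
the linear growth in $\mathrm{(A}_3\mathrm{)}(iii)$ and the uniform estimates bound this by a constant independent of $n$. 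Analogous estimates handle $M^{t_n,x_n}$ and $H^{t_n,x_n}$, yielding joint $\mathrm{S}$-tightness.

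For the identification step, combine this $\mathrm{S}$-tightness with the $\mathcal{C}$-tightness of $(X^{t_n,x_n},K^{t_n,x_n},W)$ from Theorem \ref{Main result 2} to extract a further subsequence $(n_k)$ along which the sextuple $(X^{n_k},K^{n_k},W,Y^{n_k},M^{n_k},H^{n_k})$ converges in law, the first three coordinates in the uniform topology and the last three in the $\mathrm{S}$-topology. By Theorem \ref{Main result 2}, the first three limits coincide with $(X^{t,x},K^{t,x},W)$; denote the remaining limits by $(\tilde Y,\tilde M,\tilde H)$. A Jakubowski-type Skorohod representation realizes this convergence almost surely on a new probability space, on which I would pass to the limit in the BSDE
\begin{equation*}
Y^{n_k}_s = g(X_T^{n_k}) + \int_s^T\!\! f(r,X^{n_k}_r,Y^{n_k}_r)\,dr + \int_s^T\!\! h(r,X^{n_k}_r,Y^{n_k}_r)\,dk_r^{n_k} - (M_T^{n_k}-M_s^{n_k}).
\end{equation*}
The terminal term passes by continuity of $g$ and uniform convergence of $X^{n_k}$; the $f$-integral is handled by continuity of $f$ together with the compatibility of $\mathrm{S}$-convergence with Lebesgue integration; the $h$-integral is treated via the stochastic Helly--Bray theorem used in the proof of Lemma \ref{conv in C}, combined with the joint a.s.\ convergence. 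This identifies $(\tilde Y,\tilde M,\tilde H)$ as a solution of (\ref{generalized BSDE}) for initial data $(t,x)$; Remark \ref{Uniq of Y} then forces $\tilde Y \stackrel{\mathrm{law}}{=} Y^{t,x}$, proving the claim for the chosen subsequence.

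The main obstacle is the passage to the limit in the nonlinear integrands $f(r,X^{n_k}_r,Y^{n_k}_r)$ and especially $h(r,X^{n_k}_r,Y^{n_k}_r)\,dk_r^{n_k}$, since $Y^{n_k}$ converges only in the weak $\mathrm{S}$-topology (and the limit process may be discontinuous), while the integrator $k^{n_k}$ varies simultaneously. Handling the $h$-term rigorously will require exploiting the joint almost-sure convergence from the Skorohod representation, together with the continuity of $h$ and the uniform integrability furnished by the a priori moment bounds.
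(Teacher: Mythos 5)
Your tightness argument (a priori $L^{2}$ bounds, the conditional-variation estimate, and the criterion of \cite[Appendix A]{le/02}) and the extraction of a subsequence along which $\left( X^{t_{n},x_{n}},K^{t_{n},x_{n}},Y^{t_{n},x_{n}},M^{t_{n},x_{n}},H^{t_{n},x_{n}}\right)$ converges jointly --- uniformly in the first two coordinates, in $\mathrm{S}$ in the last three --- coincide with the paper's Steps 1--3. The genuine gap is in your identification step. Passing to the limit yields only that the limit process $\tilde{Y}$ satisfies, for $s$ outside a countable set (then for every $s$ by right-continuity), $\tilde{Y}_{s}=g(X_{T}^{t,x})+\int_{s}^{T}\mathbb{1}_{[t,T]}(r)\,f(r,X_{r}^{t,x},\tilde{Y}_{r})\,dr-(\tilde{M}_{T}-\tilde{M}_{s})-\int_{s}^{T}h(r,X_{r}^{t,x},\tilde{Y}_{r})\,dk_{r}^{t,x}$, where $\tilde{M}$ is merely some c\`{a}dl\`{a}g process which, by \cite[Lemma A.1]{le/02}, is a martingale with respect to the \emph{enlarged} filtration ${\mathcal{F}}^{X^{t,x},\tilde{Y},\tilde{M}}$. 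This does not make $(\tilde{Y},\tilde{M})$ a solution of (\ref{generalized BSDE}): there the martingale part must have the representation $\int U_{r}\,dM_{r}^{X^{t,x}}$ and the solution is adapted to the filtration of $X^{t,x}$, whereas $\tilde{Y}$ is a priori adapted only to the joint filtration. Remark \ref{Uniq of Y}, which rests on El Karoui's representation $Y_{s}=u(s,X_{s}^{t,x})$ valid for genuine solutions of (\ref{generalized BSDE}), therefore cannot be invoked for $\tilde{Y}$; writing ``Remark \ref{Uniq of Y} then forces $\tilde{Y}\xlongequal{law}Y^{t,x}$'' begs the question.

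What is actually required --- and what the paper devotes the second half of its proof to --- is a pathwise comparison of $\tilde{Y}$ with $Y^{t,x}$ on the limit space, and for this one must first show that $M^{X^{t,x}}$ \emph{remains} a martingale with respect to ${\mathcal{F}}^{X^{t,x},\tilde{Y},\tilde{M}}$, since enlarging the filtration can destroy the martingale property. The paper establishes this via a martingale-problem argument: for bounded continuous functionals $\psi_{s_{1}}$ and $\varphi \in C^{\infty }(\mathbb{R}^{d})$ it passes to the limit in the identity $\mathbb{E}\big[\psi_{s_{1}}\left( X^{n},Y^{n},M^{n}\right) \big(\varphi (X_{s_{2}}^{n})-\varphi (X_{s_{1}}^{n})-\int L\varphi \,dr+\int \nabla \varphi \,dK^{n}\big)\big]=0$, using the uniform convergence of $(X^{n},K^{n})$ and the stochastic Helly--Bray result \cite[Proposition 3.4]{za/08} to handle the $dK^{n}$ term. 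Once $M^{X^{t,x}}$, hence also $M^{t,x}=\int U_{r}^{t,x}\,dM_{r}^{X^{t,x}}$, is known to be a martingale in the joint filtration, It\^{o}'s formula applied to $|Y_{s}^{t,x}-\tilde{Y}_{s}|^{2}$ makes the cross term a true martingale, and the monotonicity of $f$, the Lipschitz property of $h$ and a generalized Gronwall lemma \cite[Lemma 12]{ma-ra/07} give the pathwise identification $\tilde{Y}=Y^{t,x}$, $\tilde{M}=M^{t,x}$, which replaces the identification in law you asserted. Without this filtration step your Gronwall comparison cannot even be set up. (Two minor slips, easily repaired: the $h$-integral in your displayed BSDE should carry a minus sign as in (\ref{generalized BSDE}), and the limit equation holds first only off a countable exceptional set of times, extended afterwards by right-continuity.)
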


\begin{proof}
The proof will follow the techniques used in \cite[Theorem 3.1]{bo-ca/04}
(see also \cite[Theorem 6.1]{pa/99}).

It is clear that%
\begin{equation}
\begin{array}{r}
Y_{s}^{t_{n},x_{n}}{=g(X}_{T}^{t_{n},x_{n}}){+\displaystyle\int_{s}^{T}%
\mathbb{1}_{[t_{n},T]}}\left( r\right) {f(r,{X_{r}^{t_{n},x_{n}},Y{%
_{r}^{t_{n},x_{n}}})}dr-\displaystyle\int_{s}^{T}U{_{r}^{t_{n},x_{n}}}%
dM_{r}^{X^{t_{n},x_{n}}}}\medskip \\
{-\displaystyle\int_{s}^{T}h(r,{X_{r}^{t_{n},x_{n}},Y{_{r}^{t_{n},x_{n}}}%
)dk_{r}^{t_{n},x_{n}}}}\,,\;s\in \lbrack 0,T].%
\end{array}
\label{generalized BSDE 2}
\end{equation}%
For the proof we will adapt the steps from the proof of \cite[Theorem 3.1]%
{bo-ca/04}.

\noindent \textit{Step 1.} The solutions satisfy the boundedness conditions
(for the proof see, e.g., \cite[Proposition 1.1]{pa-zh/98}):
\begin{equation*}
\begin{array}{l}
\mathbb{E}{\big(}\sup_{s\in \left[ t,T\right] }|Y_{s}^{t_{n},x_{n}}|^{2}{%
\big)+}\mathbb{E}{\displaystyle\int_{t}^{T}}||U_{s}^{t_{n},x_{n}}{\sigma ({%
X_{r}^{t_{n},x_{n}})}}||^{2}ds\leq C,\;\forall t\in \left[ 0,T\right]
,\;\forall n\in \mathbb{N}\medskip \\
\mathbb{E}{\big(}\sup_{s\in \left[ t,T\right] }|Y_{s}^{t,x}|^{2}{\big)+}%
\mathbb{E}{\displaystyle\int_{t}^{T}}||U_{s}^{t,x}{\sigma ({X_{r}^{t,x})}}%
||^{2}ds\leq C,\;\forall t\in \left[ 0,T\right] ,%
\end{array}%
\end{equation*}%
where $C>0$ is a constant not depending on $n$.

\noindent\textit{Step 2.} To obtain the tightness property with respect to
the \textrm{S}-topology it is sufficient to compute the conditional
variation $\mathrm{CV}_{T}$ (see definition (\ref{def cond var})) of the
processes $Y^{t_{n},x_{n}}$, $M^{t_{n},x_{n}}$ and $H^{t_{n},x_{n}}$
respectively; we recall the notation (\ref{def M and H}) for the quantities $%
M^{t_{n},x_{n}}$ and $H^{t_{n},x_{n}}$.

As in \cite[Theorem 3.1]{bo-ca/04}, after some easy computation we deduce
that there exists a constant $C>0$ independent of $n$, such that%
\begin{equation*}
\begin{array}{r}
\mathrm{CV}_{T}\left( Y^{t_{n},x_{n}}\right) +\mathbb{E}{\Big(}\sup_{s\in
\lbrack 0,T]}|Y_{s}^{t_{n},x_{n}}|{\Big)+}\mathbb{E}{\Big(}\sup_{s\in
\lbrack 0,T]}|M_{s}^{t_{n},x_{n}}|{\Big)+}\mathrm{CV}_{T}\left(
H^{t_{n},x_{n}}\right) \medskip \\
+\mathbb{E}{\Big(}\sup_{s\in \lbrack 0,T]}|H_{s}^{t_{n},x_{n}}|{\Big)}\leq
C,\;\forall n\in \mathbb{N}^{\ast }.%
\end{array}%
\end{equation*}

\noindent \textit{Step 3.} The above condition ensures (see \cite[Appendix A]%
{le/02} or \cite[Theorem 3.5]{bo-ca/04}) the tightness of the sequence $%
\left( Y^{t_{n},x_{n}},M^{t_{n},x_{n}},H^{t_{n},x_{n}}\right) $ with respect
to the \textrm{S}-topology. Therefore there exists a subsequence, still
denoted by $\left( Y^{t_{n},x_{n}},M^{t_{n},x_{n}},H^{t_{n},x_{n}}\right) $,
and a process $\left( {\bar{Y}},\,{\bar{M}},\,{\bar{H}}\right) \in \left(
\mathcal{D}\left( [0,T],{\mathbb{R}}^{k}\right) \right) ^{3}$ such that%
\begin{equation}
\left( {{X^{t_{n},x_{n}}}},{{K^{t_{n},x_{n}}}}%
,Y^{t_{n},x_{n}},M^{t_{n},x_{n}},H^{t_{n},x_{n}}\right)
\xrightarrow[\mathrm{U\times U\times S\times
S\times S}]{\;\;\;*\; \;\;}%
\left( X^{t,x},K^{t,x},{\bar{Y}},{\bar{M}},{\bar{H}}\right) ,
\label{converg of (X,Y)}
\end{equation}%
weakly on $({\mathcal{C}}([0,T],{\mathbb{R}}^{d}))^{2}\times \left( \mathcal{%
D}([0,T],{\mathbb{R}}^{k})\right) ^{3}$.

In order to pass to the limit in (\ref{generalized BSDE 2}) we use the
continuity of $f$, \cite[Corollary 2.11]{ja/97}, the Lipschitzianity of $h$,%
\begin{equation*}
{{k^{t_{n},x_{n}}%
\xrightarrow[u]{\;\;\;*\; \;\;}%
k^{t,x},}}
\end{equation*}%
and we apply \cite[Lemma 3.3]{bo-ca/04}; we precise that the conclusion of
this lemma is still true in the point $T$, hence there exists a countable
set $Q\subset \lbrack 0,T)$ such that, for any $s\in \lbrack 0,T]\setminus
Q~,$%
\begin{equation*}
{\bar{Y}}_{s}=g(X_{T}^{t,x})+\int_{s}^{T}{\mathbb{1}_{[t,T]}}\left( r\right)
f(r,X_{r}^{t,x},{\bar{Y}}_{r})dr-({\bar{M}}_{T}-{\bar{M}}_{s})-%
\int_{s}^{T}h(r,X_{r}^{t,x},{\bar{Y}}_{r})dk_{r}~.
\end{equation*}%
Since the processes ${\bar{Y}}$, ${\bar{M}}$ and ${\bar{H}}$ are c\`{a}dl%
\`{a}g, the above equality holds true for any $s\in \lbrack 0,T].$

We mention that ${M^{X^{t,x}}}$ and ${\bar{M}}$ are martingales with respect
to the same filtration. Indeed, ${\bar{M}_{s}}$ is ${\mathcal{F}}%
_{s}^{X^{t,x},{\bar{Y}},{\bar{M}}}$-adapted and, moreover, ${\bar{M}}$ is an
${\mathcal{F}}^{X^{t,x},{\bar{Y}},{\bar{M}}}$-martingale (for the proof see
\cite[Lemma A.1]{le/02}).

Let now $\psi_{s}$ be a bounded continuous mapping from ${\mathcal{C}}([0,s],%
{\mathbb{R}}^{d})\times\mathcal{D}([0,s],{\mathbb{R}}^{k})^{2}$, $\varphi\in
C^{\infty}({\mathbb{R}}^{d})$ and%
\begin{equation*}
L=\frac{1}{2}\sum_{i,j}(\sigma(\cdot)\,\sigma^{\ast}(\cdot))_{ij}(\cdot )%
\frac{{\partial}^{2}}{\partial x_{i}\partial x_{j}}+\sum_{i}b_{i}(\cdot )%
\frac{\partial}{\partial x_{i}}\,
\end{equation*}
be the infinitesimal generator of the diffusion process $X^{t_{n},x_{n}}$.

From It{\^{o}}'s formula we obtain that
\begin{equation*}
\varphi(X_{s}^{t_{n},x_{n}})-\varphi(x_{n})-\int_{t_{n}}^{s}L\varphi
(X_{r}^{t_{n},x_{n}})dr+\int_{t_{n}}^{s}\nabla%
\varphi(X_{r}^{t_{n},x_{n}})dK_{r}^{t_{n},x_{n}}
\end{equation*}
is a martingale.

Therefore, for any $0\leq s_{1}<s_{2}\leq T,$%
\begin{equation*}
\begin{array}{r}
\displaystyle{\mathbb{E}\Big[}\psi _{s_{1}}\left(
X^{t_{n},x_{n}},Y^{t_{n},x_{n}},M^{t_{n},x_{n}}\right) {\Big(}\varphi
(X_{s_{2}}^{t_{n},x_{n}})-\varphi (X_{s_{1}}^{t_{n},x_{n}})-\int_{s_{1}\vee
t_{n}}^{s_{2}\vee t_{n}}L\varphi (X_{r}^{t_{n},x_{n}})dr\medskip \\
\displaystyle+\int_{s_{1}\vee t_{n}}^{s_{2}\vee t_{n}}\nabla \varphi
(X_{r}^{t_{n},x_{n}})dK_{r}^{t_{n},x_{n}}{\Big)\Big]}=0,\;\forall n.%
\end{array}%
\end{equation*}%
It can be proved, using (\ref{converg of (X,Y)}), that%
\begin{equation*}
\begin{array}{l}
\lim_{n\rightarrow \infty }\displaystyle{\mathbb{E}\Big[}\psi _{s_{1}}\left(
X^{t_{n},x_{n}},Y^{t_{n},x_{n}},M^{t_{n},x_{n}}\right) {\Big(}\varphi
(X_{s_{2}}^{t_{n},x_{n}})-\varphi (X_{s_{1}}^{t_{n},x_{n}})-\int_{s_{1}\vee
t_{n}}^{s_{2}\vee t_{n}}L\varphi (X_{r}^{t_{n},x_{n}})dr{\Big)\Big]}\medskip
\\
=\displaystyle{\mathbb{E}\Big[\psi _{s_{1}}\left( X^{t,x},\bar{Y},\bar{M}%
\right) \Big(}\varphi (X_{s_{2}}^{t,x})-\varphi
(X_{s_{1}}^{t,x})-\int_{s_{1}\vee t}^{s_{2}\vee t}L\varphi (X_{r}^{t,x})dr{%
\Big)\Big].}%
\end{array}%
\end{equation*}%
On the other hand,%
\begin{equation*}
\begin{array}{l}
\lim_{n\rightarrow \infty }\displaystyle{\mathbb{E}\Big[\psi _{s_{1}}\left(
X^{t_{n},x_{n}},Y^{t_{n},x_{n}},M^{t_{n},x_{n}}\right) \int_{s_{1}\vee
t_{n}}^{s_{2}\vee t_{n}}\nabla \varphi
(X_{r}^{t_{n},x_{n}})dK_{r}^{t_{n},x_{n}}\Big]}\medskip \\
\displaystyle={\mathbb{E}\Big[\psi _{s_{1}}\left( X^{t,x},\bar{Y},\bar{M}%
\right) \int_{s_{1}\vee t}^{s_{2}\vee t}\nabla \varphi
(X_{r}^{t,x})dK_{r}^{t,x}\Big],}%
\end{array}%
\end{equation*}%
by \cite[Proposition 3.4]{za/08}.

Therefore%
\begin{equation*}
\begin{array}{r}
\displaystyle{\mathbb{E}\Big[\psi_{s_{1}}\left( X^{t,x},\bar{Y},\bar {M}%
\right) \Big(}\varphi(X_{s_{2}}^{t,x})-\varphi(X_{s_{1}}^{t,x})-\int_{s_{1}%
\vee t}^{s_{2}\vee t}L\varphi(X_{r}^{t,x})dr{\Big)\Big]}\medskip \\
\displaystyle+\int_{s_{1}\vee t}^{s_{2}\vee
t}\nabla\varphi(X_{r}^{t,x})dK_{r}^{t,x}{\Big)\Big]}=0.%
\end{array}%
\end{equation*}
Using It{\^{o}}'s formula we see that%
\begin{equation*}
{\mathbb{E}}\left[ \psi_{s_{1}}\left( X^{t,x},\bar{Y},\bar{M}\right) {%
\int_{s_{1}\vee t}^{s_{2}\vee t}}\nabla\varphi(X_{r}^{t,x})dM_{r}^{X^{t,x}}%
\right] =0
\end{equation*}
and therefore $M^{X^{t,x}}$ is a ${\mathcal{F}}^{X^{t,x},{\bar{Y}},{\bar{M}}%
} $-martingale.

Now since $Y^{t,x}$ and ${U{^{t,x}}}$ are ${\mathcal{F}}^{X^{t,x}}$-adapted,
${M^{t,x}:=\int_{t}^{\cdot}U{_{r}^{t,x}}dM_{r}^{X^{t,x}}}$ is also ${%
\mathcal{F}}^{X^{t,x},{\bar{Y}},{\bar{M}}}$-martingale.

Let us take $0\leq s_{1}\leq s_{2}\leq T$. It{\^{o}}'s formula yields%
\begin{equation*}
\begin{array}{l}
\displaystyle|Y_{s_{1}}^{t,x}-{\bar{Y}}_{s_{1}}|^{2}+\left( [M^{t,x}-{\bar {M%
}}]_{s_{2}}-[M^{t,x}-{\bar{M}}]_{s_{1}}\right) =|Y_{s_{2}}^{t,x}-{\bar{Y}}%
_{s_{2}}|^{2}\medskip \\
\displaystyle\quad+2\int_{s_{1}}^{s_{2}}\left\langle Y_{r}^{t,x}-{\bar{Y}}%
_{r},f(r,X_{r}^{t,x},{Y}_{r}^{t,x})-f(r,X_{r}^{t,x},{\bar{Y}}%
_{r})\right\rangle dr\medskip \\
\displaystyle\quad+2\int_{s_{1}}^{s_{2}}\left\langle Y_{r}^{t,x}-{\bar{Y}}%
_{r},h(r,X_{r}^{t,x},{Y}_{r}^{t,x})-h(r,X_{r}^{t,x},{\bar{Y}}%
_{r})\right\rangle dA_{r}^{t,x}\medskip \\
\displaystyle\quad-2\int_{s_{1}}^{s_{2}}\left\langle Y_{r}^{t,x}-{\bar{Y}}%
_{r},d(M_{r}^{t,x}-\bar{M}_{r})\right\rangle \medskip \\
\displaystyle\leq|Y_{s_{2}}^{t,x}-{\bar{Y}}_{s_{2}}|^{2}+2\alpha\vee \beta%
\displaystyle\int_{s_{1}}^{s_{2}}|Y_{r}^{t,x}-{\bar{Y}_{r}}%
|^{2}d(r+A_{r}^{t,x})-2\int_{s_{1}}^{s_{2}}\left\langle Y_{r}^{t,x}-{\bar{Y}}%
_{r},d(M_{r}^{t,x}-\bar{M}_{r})\right\rangle .%
\end{array}%
\end{equation*}
since $\displaystyle\int_{t}^{\cdot}\left\langle Y_{r}^{t,x}-{\bar{Y}}%
_{r},d(M_{r}^{t,x}-\bar{M}_{r})\right\rangle $ is a ${\mathcal{F}}^{X^{t,x},{%
\bar{Y}},{\bar{M}}}$-martingale.

Hence, from a generalized Gronwall lemma (see, e.g., \cite[Lemma 12]%
{ma-ra/07}), by taking $s_{2}=T$, we deduce the identification%
\begin{equation*}
Y^{t,x}={\bar{Y}\;\;}\text{and \ }M^{t,x}={\bar{M}.}
\end{equation*}%
\hfill
\end{proof}

\subsection{Proof of Theorem \protect\ref{Main result 1}}

Let us denote%
\begin{equation}
u^{n}(t,x):=Y_{t}^{t,x,n}\text{\quad and\quad}u(t,x):=Y_{t}^{t,x}.
\label{def of u^n and u}
\end{equation}
Hence, $u^{n}$ and $u$ are deterministic functions since $Y^{t,x,n}\;$is
adapted with respect to the filtration generated by $X^{t,x,n}$ and $%
Y^{t,x}\;$is adapted with respect to the filtration generated by $X^{t,x}$.

First we prove that the functions $u^{n}:[0,T]\times \mathbb{R}%
^{d}\rightarrow \mathbb{R}^{d}$ and $u:[0,T]\times \bar{G}\rightarrow
\mathbb{R}^{d}$ defined by (\ref{def of u^n and u}) are continuous. We will
show only that the function $u$ is continuous. Let $\left(
t_{n},x_{n}\right) \rightarrow \left( t,x\right) \in \left[ 0,T\right]
\times \bar{G}$, as $n\rightarrow \mathbb{\infty }$. From the proof of
Proposition \ref{cont of Y}, we can extract a subsequence still denoted $%
\left( t_{n},x_{n}\right) $, such that
\begin{equation*}
\left( {{X^{t_{n},x_{n}}}},{{K^{t_{n},x_{n}}}}%
,Y^{t_{n},x_{n}},M^{t_{n},x_{n}}\right)
\xrightarrow[\mathrm{U\times U\times S\times
S}]{\;\;\;*\; \;\;}%
\left( X^{t,x},K^{t,x},{Y}^{t,x},{M}^{t,x}\right) .
\end{equation*}%
We know from \cite[Lemma 3.3]{bo-ca/04} applied for $t=T$, that%
\begin{equation*}
{\int_{0}^{T}h(r,{X_{r}^{t_{n},x_{n}},Y{_{r}^{t_{n},x_{n}}}%
)dk_{r}^{t_{n},x_{n}}}}\rightarrow {\int_{0}^{T}h(r,{X_{r}^{t,x},Y{_{r}^{t,x}%
})dk_{r}^{t,x}~}}\text{in law, as }n\rightarrow \infty .
\end{equation*}%
Using \cite[Remark 2.4]{ja/97}, we see that ${M}_{T}^{t_{n},x_{n}}%
\xrightarrow[]{\;\;\; \;}%
{M}_{T}^{t,x}$ in law, since ${M}^{t_{n},x_{n}}%
\xrightarrow[\mathrm{S}]{\;\;\;*\; \;\;}%
{M^{t,x}}$.

Hence we can pass to the limit in%
\begin{equation*}
\begin{array}{r}
u(t_{n},x_{n})=Y_{t_{n}}^{t_{n},x_{n}}=Y_{0}^{t_{n},x_{n}}{=g(X}%
_{T}^{t_{n},x_{n}}){+\displaystyle\int_{0}^{T}\mathbb{1}_{[t_{n},T]}}\left(
r\right) {f(r,{X_{r}^{t_{n},x_{n}},Y{_{r}^{t_{n},x_{n}}})}dr-M}%
_{T}^{t_{n},x_{n}}\medskip \\
{-\displaystyle\int_{0}^{T}h(r,{X_{r}^{t_{n},x_{n}},Y{_{r}^{t_{n},x_{n}}}%
)dk_{r}^{t_{n},x_{n}}}}%
\end{array}%
\end{equation*}%
and, as in the the proof of Proposition \ref{cont of Y}, we deduce that the
limit of $u(t_{n},x_{n})$ is%
\begin{equation*}
\begin{array}{r}
{g(X}_{T}^{t,x}){+\displaystyle\int_{0}^{T}\mathbb{1}_{[t,T]}}\left(
r\right) {f(r,{X_{r}^{t,x},Y{_{r}^{t,x}})}dr-M}_{T}^{t,x}{-\displaystyle%
\int_{0}^{T}h(r,{X_{r}^{t,x},Y{_{r}^{t,x}})dk_{r}^{t,x}}}\medskip \\
{{=Y_{0}^{t,x}}}=Y_{t}^{t,x}=u\left( t,x\right) .%
\end{array}%
\end{equation*}%
It is easy to show that, even if $b$ and $\sigma $ are only continuous
functions, the proof from \cite[Theorem 4.3]{pa-zh/98} (see also \cite[%
Theorem 3.2]{pa/99} for nonreflecting case) still works in order to show
that the functions $u^{n}$ and $u$ defined by (\ref{def of u^n and u}) are
viscosity solutions of the PDEs (\ref{PDE Neumann approx}) and (\ref{PDE
Neumann}) respectively.

Finally, as a consequence of Proposition \ref{result_Boufoussi} we deduce
the solution $u$ of the deterministic system (\ref{PDE Neumann}) is
approximated by the functions $u^{n}$, i.e.%
\begin{equation*}
\lim_{n\rightarrow\infty}u^{n}(t,x)=u(t,x),\;\forall\left( t,x\right)
\in\lbrack0,T]\times\bar{G}.
\end{equation*}

\noindent \textbf{Acknowledgement.} \textit{The authors would like to thank
the referee for the remarks and comments which have led to a significant
improvement of the paper. The authors L. Maticiuc and A. Z\u{a}linescu thank
the IMATH laboratory of Universit\'{e} du Sud Toulon Var for its hospitality.%
}


\begin{thebibliography}{99}
\bibitem{ba-el-pa/09} K. Bahlali, A. Elouaflin, E. Pardoux, \textit{%
Homogenization of semilinear PDEs with discontinuous averaged coefficients},
Electronic Journal of Probability 14 (2009), 477-499.\vspace{-0.15cm}

\bibitem{bo-ca/04} B. Boufoussi, J. van Casteren, \textit{An approximation
result for a nonlinear Neumann boundary value problem via BSDEs}, Stochastic
Process. Appl. 114 (2004), 331-350.\vspace{-0.15cm}

\bibitem{ka/97} N. El Karoui, \textit{Backward stochastic differential
equations: a general introduction}, Backward stochastic differential
equations (eds. N. El Karoui, L. Mazliak), 7-26, Pitman Research Notes in
Mathematics Series, 364, Longman 1997.\vspace{-0.15cm}

\bibitem{ka/75} N. El Karoui, \textit{Processus de r\'{e}flexion dans }$%
\mathbb{R}^{n}$, S\'{e}minaire de Probabilit\'{e} IX, Lecture Notes in Math.
no. 465 (1975), 534-554, Springer, Berlin.\vspace{-0.15cm}

\bibitem{ja/97} A. Jakubowski, \textit{A non-Skorohod Topology on the
Skorohod space}, Electronic Journal of Probability 2 (1997), 1-21.\vspace{%
-0.15cm}

\bibitem{ka-sh/88} I. Karatzas, S.E. Shreve, \textit{Brownian motion and
stochastic calculus}, Springer-Verlag, New-York, 1988.\vspace{-0.15cm}

\bibitem{la-sl/03} W. \L aukajtys, L. S\l omi\'{n}ski, \textit{Penalization
methods for reflecting stochastic differential equations with jumps}, Stoch.
Stoch. Rep. 75 (5) (2003), 275-293.\vspace{-0.15cm}

\bibitem{la-sl/12} W. \L aukajtys, L. S\l omi\'{n}ski, \textit{Penalization
methods for the Skorokhod problem and reflecting SDEs with jumps},
Bernoulli, forthcoming paper, 2013.\vspace{-0.15cm}

\bibitem{le/02} A. Lejay, \textit{BSDE driven by Dirichlet process and
semi-linear Parabolic PDE. Application to Homogenization}, Stochastic
Process. Appl. 97 (2002), 1-39.\vspace{-0.15cm}

\bibitem{li-me-sz/81} P. L. Lions, J. L. Menaldi, A. S. Sznitman, \textit{%
Construction de processus de diffusion r\'{e}fl\'{e}chis par p\'{e}%
nalisation du domaine}, C.R. Acad. Sci. Paris S\'{e}r. I Math. 292 (1981),
559-562.\vspace{-0.15cm}

\bibitem{li-sz/84} P.L. Lions, A.S. Sznitman, \textit{Stochastic
differential equations with reflecting boundary conditions}, Comm. Pure
Appl. Math. 37 (1984), 511-537.\vspace{-0.15cm}

\bibitem{ma-ra/07} L. Maticiuc, A. R\u{a}\c{s}canu, \textit{Viability of
moving sets for a nonlinear Neumann problem}, Nonlinear Analysis 66 (2007),
1587-1599.\vspace{-0.15cm}

\bibitem{me/83} J. L. Menaldi, \textit{Stochastic variational inequality for
reflected diffusion}, Indiana Univ. Math. J., 32 (1983), 733-744.\vspace{%
-0.15cm}

\bibitem{pa/99} E. Pardoux, \textit{BSDEs, weak convergence and
homogenization of semilinear PDEs}, Nonlinear Analysis, Differential
Equations and Control (Montreal, QC, 1998), Kluwer Academic Publishers,
Dordrecht (1999), 503-549.\vspace{-0.15cm}

\bibitem{pa-zh/98} E. Pardoux, S. Zhang, \textit{Generalized BSDEs and
nonlinear Neumann boundary value problems}, Probab. Theory Related Fields
110 (1998), 535-558.\vspace{-0.15cm}

\bibitem{pa-ra/12} E. Pardoux, A. R\u{a}\c{s}canu, \textit{Stochastic
differential equations, Backward SDEs, Partial differential equations},
Stochastic Modelling and Applied Probability, Springer, in press, 2013.%
\vspace{-0.15cm}

\bibitem{ro-sl/97} A. Rozkosz, L. S\l omi\'{n}ski, \textit{On stability and
existence of solutions of SDEs with reflection at the boundary}, Stochastic
Process. Appl. 68 (1997), 285-302.\vspace{-0.15cm}

\bibitem{st/85} C. Stricker, \textit{Loi de semimartingales et criteres de
compacit\'{e}}, S\'{e}m. de Probababilit\'{e}, XIX Lect. Notes in Math. 1123
(1985), 209-217.\vspace{-0.15cm}

\bibitem{st-va/71} D.W. Stroock, S.R.S. Varadhan, \textit{Diffusion
Processes with boundary conditions}, Comm. Pure Appl. Math. 24 (1971),
147-225.\vspace{-0.15cm}

\bibitem{ta/79} H. Tanaka, \textit{Stochastic differential equations with
reflecting boundary conditions in convex regions}, Hiroshima Math. J. 9
(1979), 163-177.\vspace{-0.15cm}

\bibitem{za/08} A. Z\u{a}linescu, \textit{Weak solutions and optimal control
for multivalued stochastic differential equations}, NoDEA, Nonlinear Differ.
Equ. Appl. 15 (4-5) (2008), 511-533.\vspace{-0.15cm}
\end{thebibliography}
\end{document}